\newtheorem{lemma}{Lemma}[section]
\newtheorem{theorem}[lemma]{Theorem}
\newtheorem*{theorem*}{Theorem}
\newtheorem{corollary}[lemma]{Corollary}
\newtheorem{proposition}[lemma]{Proposition}
\theoremstyle{definition}
\newtheorem{definition}[lemma]{Definition}
\theoremstyle{remark}
\newtheorem{remark}[lemma]{Remark}
\newcommand*\circled[1]{\tikz[baseline=(char.base)]{
\node[shape=circle,draw, minimum size = 13pt,inner sep = -2pt ] (char) {\strut #1};}}
\newenvironment{rAnd}{\begin{rcases}}{\end{rcases}\hspace{-3pt}\circled{$\wedge$}}
\newenvironment{lAnd}{\circled{$\wedge$}\hspace{-3pt}\begin{cases}}{\end{cases}}
\newenvironment{rOr}{\begin{rcases}}{\end{rcases}\hspace{-3pt}\circled{$\vee$}}
\newenvironment{lOr}{\circled{$\vee$}\hspace{-3pt}\begin{cases}}{\end{cases}}
\newenvironment{rXor}{\begin{rcases}}{\end{rcases}\hspace{-3pt}\circled{$\underline{\vee}$}}
\newenvironment{lXor}{\circled{$\underline{\vee}$}\hspace{-3pt}\begin{cases}}{\end{cases}}
\newenvironment{rEquiv}{\begin{rcases}}{\end{rcases}\hspace{-3pt}\circled{$\Leftrightarrow$}}
\newenvironment{lEquiv}{\circled{$\Leftrightarrow$}\hspace{-3pt}\begin{cases}}{\end{cases}}
\newcommand{\LAnd}[1]{\begin{lAnd}#1\end{lAnd}}
\DeclareTextFontCommand{\emph}{\bfseries}
\title{Most Iterations of Projections Converge}
\date{\today}
\author{Daylen K. Thimm\footnote{daylen.thimm@student.uibk.ac.at\\The author was supported by the doctoral scholarship of the University of Innsbruck.}}
\begin{document}

\maketitle
\begin{center}
  \large
  Institut für Mathematik, Universität Innsbruck\\
  Technikerstraße 13, 6020 Innsbruck, Austria
\end{center}
\section*{Abstract}
Consider three closed linear subspaces $C_1, C_2,$ and $C_3$ of a Hilbert space $H$ and the orthogonal projections $P_1, P_2$ and $P_3$ onto them. Halperin showed that a point in $C_1\cap C_2 \cap C_3$ can be found by iteratively projecting any point $x_0 \in H$ onto all the sets in a periodic fashion. The limit point is then the projection of $x_0$ onto $C_1\cap C_2 \cap C_3$. Nevertheless, a non-periodic projection order may lead to a non-convergent projection series, as shown by Kopeck\'{a}, M\"{u}ller, and Paszkiewicz. This raises the question how many projection orders in $\{1,2,3\}^\mathbb{N}$ are ``well behaved'' in the sense that they lead to a convergent projection series. Melo, da Cruz Neto, and de Brito provided a necessary and sufficient condition under which the projection series converges and showed that the ``well behaved'' projection orders form a large subset in the sense of having full product measure. We show that also from a topological viewpoint the set of ``well behaved'' projection orders is a large subset: it contains a dense $G_\delta$ subset with respect to the product topology. Furthermore, we analyze why the proof from the measure theoretic case cannot be directly adapted to the topological setting.\\

\section{Introduction}\label{sec:intro}
Let $C_1$ and $C_2$ be two closed and convex subsets of a Hilbert space with nonempty intersection. We are interested in the problem of finding some point in the intersection $C_1\cap C_2$, which is usually is referred to as the \emph{convex feasibility problem}. Von Neumann proposed the method of alternating projections for solving it \cite{vonNeumann1949rings}. As a more general setting, we may consider a Hilbert space $(X,\langle \cdot,\cdot\rangle)$, an index set $I\coloneqq\{1,\dots,N\}$ for $N\in\mathbb{N}$, and a finite number of closed convex sets $(C_n)_{n\in I}$ within $X$. By $P_n$ we denote the distance minimizing projection from $X$ to the set $C_n$, $n\in I $, which is well defined, as $C_n$ is convex and closed. Let us choose some starting point $\xi_0\in X$ and a sequence $x = (x_n)_{n\in\mathbb{N}}\in I^{\mathbb{N}}$. We denote the space of all such sequences by $K\coloneqq I^{\mathbb{N}}$. Iteratively we may now define a projection sequence $(\xi_n)_{n\in\mathbb{N}}$ by
\[ \xi_n = P_{x_n}(\xi_{n-1}), \quad n\in\mathbb{N}.\]
We call this construction the method of alternating projections. The hoped for result of this method is that the sequence $(\xi_n)_{n\in\mathbb{N}}$ converges to a point in the intersection \mbox{$C_1\cap\dots\cap C_N$}. Following this idea, von Neumann proved in \cite{vonNeumann1949rings} that at least there is convergence when $X$ is a Hilbert space, $N=2$, and $C_1$ and $C_2$ are closed linear subspaces. A simple geometric proof of von Neumann's theorem is provided by Kopeck\'{a} and Reich in \cite{kopecka2004note}. In \cite{halperin1962product} Halperin showed that for periodic $x$ von Neumann's result holds for any finite number of sets. Sakai was in \cite{sakai1995strong} able to extend this to quasi-periodic sequences.
\begin{definition}
  We call a sequence $x=(x_n)_{n\in\mathbb{N}} \in K$ quasi-periodic if
  \[\exists m \in \mathbb{N}\colon \forall k \in \mathbb{N} \colon \{x_k, x_{k+1},\dots, x_{k+m-1}\} = I.\]
  The smallest such $m$ we call the quasi-period of $x$.
\end{definition}
This condition imposes a uniform bound on the distance of occurrences of indices in $x$, similar to periodicity.\\

Kopeck\'{a}, Müller, and Paszkiewicz showed in \cite{kopecka2014product, kopecka2017strange} that not every $x\in K$ induces a converging sequence $(\xi_n)_{n\in\mathbb{N}}$. They constructed a counterexample in an infinite-dimensional Hilbert space of three closed linear subspaces such that for any $0\neq \xi_0 \in X$ there is a sequence $x\in K$ where the method of alternating projections does not strongly converge. This motivates the following question:
\begin{quote}
  How large is the set of sequences $x\in K$ for which $(\xi_n)_{n\in\mathbb{N}}$ is strongly convergent?
\end{quote}
Melo, da Cruz Neto, and de Brito studied this question within the context of measure. They studied $K$ as a probability space $(K, \mathbb{P})$ by using the product measure of $\mathbb{P}_I$ over the index set $\mathbb{N}$, where $\mathbb{P}_I (\{1\})=\dots =\mathbb{P}_I (\{N\})=\frac{1}{N}$. In Proposition 4.3 of \cite{BCM2022} they showed that the up until now considered sets of periodic and quasi-periodic sequences are null sets with respect to $\mathbb{P}$. Furthermore, they introduced a more general notion than quasi-periodic sequences: the notion of quasi-normal sequences.
\begin{definition}[Definition 4.2 in \cite{BCM2022}]\label{def:quasi_normal}
  We call a sequence $(x_n)_{n\in\mathbb{N}}\in K$ quasi-normal if there exists an $L\in\mathbb{N}$ and a sequence of disjoint blocks $(\mathcal{R}_k)_{k\in\mathbb{N}}$ of consecutive elements of $(x_n)_{n\in\mathbb{N}}$ with $L$ terms, where each block $\mathcal{R}_k$ contains every element of $I$ so that there exists a function $f\colon \mathbb{N} \to (0,\infty)$ with $\lim_{r_k\to\infty} f(r_k) = \infty$ such that
  \[\sum_{k\in\mathbb{N}} \frac{1}{r_k \cdot f(r_k)} = \infty,\]
  where $x_{r_k}$ is the first element of the block $\mathcal{R}_k$ and $(r_k)_{k\in\mathbb{N}}$ is an increasing sequence. We denote the set of quasi-normal sequences by $\mathcal{N}$.
\end{definition}
This type of sequences is interesting, as they guarantee strong convergence of the projection sequence $(\xi_n)_{n\in\mathbb{N}}$ if the sequence $(\xi_n)_{n\in\mathbb{N}}$ has at least one accumulation point; see Theorem 4.1 of \cite{BCM2022}. In fact, the statement there even works for Hadamard spaces. Additionally, Proposition 4.2 of \cite{BCM2022} states that quasi-normal sequences form a full measure subset of $K$.\\

In this paper, we consider topological and metric notions of large and small subsets instead of measure theoretic versions. More precisely, we consider the question whether the set of sequences $x\in K$ leading to a strongly convergent projection sequence $(\xi_n)_{n\in\mathbb{N}}$ is a large subset in a topological and metric sense and, in particular, if the quasi-normal sequences already form such a large subset. Specifically, we are interested in the topological notions of meager and dense $G_\delta$ subsets and the stronger notions of $\sigma$-porous and co-$\sigma$-porous subsets as a metric notion of small and large subsets, respectively; see e.g. \cite{Zajicek}.

\begin{definition}[($\sigma$-)porous subset]
  A subset $A$ of a metric space $(X,d)$ is called \emph{porous at $x\in A$} if there are $\varepsilon_0>0$ and $\alpha>0$ such that for every $\varepsilon\in (0,\varepsilon_0)$ there is a point $y\in X\setminus A$ with $\rho(x,y)<\varepsilon$ and $B(y,\alpha \varepsilon)\cap A=\emptyset$ or, put differently,
  \[\textnormal{ $A \textnormal{ porous at } x:\Leftrightarrow$ \resizebox{11.4cm}{!}{$\exists \varepsilon_0>0:\exists \alpha>0: \forall \varepsilon \in (0,\varepsilon_0): \exists y\in X\setminus A: \rho(x,y) < \varepsilon \wedge B(y,\alpha \varepsilon)\cap A = \emptyset.$}}\]
  The set $A$ is called \emph{porous} if it is porous at all its points. A subset of $X$ is called \emph{$\boldsymbol\sigma$-porous} if it is a countable union of porous sets.	We call a set \emph{co-porous} or \emph{co-$\boldsymbol\sigma$-porous} if its complement is porous or $\sigma$-porous, respectively.
\end{definition}

For $N \in \mathbb{N}$ we equip $I = \{1,\dots,N\}$ with the discrete topology and $K \coloneqq I^\mathbb{N}$ with the product topology. It is well known, that the topology on $K$ is induced by the complete metric
\begin{align*}
  d(x, y) \coloneqq \max \{2^{-j} d_0(x_j, y_j) \colon j\in \mathbb{N}\},
\end{align*}
where $d_0$ denotes the discrete metric on $I$. Note that for $x\in K$ and $j\in\mathbb{N}$ we have
\begin{equation} \label{smallDistanceImpliesSameEntriesAtBeginning}
B(x, 2^{-j}) = \{y\in K \colon y_1 = x_1 \wedge \dots \wedge y_j = x_j\}.
\end{equation}
Using these topological notions of large and small subsets, one would hope for an analogous result to that of Melo, da Cruz Neto, and de Brito stating that almost all projection orders induce a strongly convergent projection series, or that the quasi-normal sequences $\mathcal{N}$ form a large subset. In the measure theoretic case this was shown by defining a stronger condition resulting in a subset $\mathcal{N}_0$ of the quasi-normal sequences and showing that already this set has full measure. The subset $\mathcal{N}_0$ is defined as the set of all $x\in K$ satisfying the assumption of the following proposition.

\begin{proposition}[Proposition 4.1 in \cite{BCM2022}]\label{prop:quasi_normal}
  Let $(x_n)_{n\in\mathbb{N}}\in K$. Suppose that there exists an $L\in\mathbb{N}$, a sequence of disjoint blocks $(\mathcal{R}_k)_{k\in\mathbb{N}}$ of consecutive elements of $(x_n)_{n\in\mathbb{N}}$ with $L$ terms, where each block $\mathcal{R}_k$ has every element of $I$. For each $k\in\mathbb{N}$ let $\mathcal{S}_k$ be the block formed by the elements between $\mathcal{R}_{k-1}$ and $\mathcal{R}_k$, which may eventually be empty. Thus, the sequence $(x_n)_{n\in\mathbb{N}}$ can be seen as follows:
  \[\mathcal{S}_1 \mathcal{R}_1\mathcal{S}_2 \mathcal{R}_2\dots\mathcal{R}_{k-1}\mathcal{S}_k \mathcal{R}_k\dots\]
  Let $|\mathcal{S}_k|$ be the number of elements of this block and $c>0$ a constant. If for all $k \in \mathbb{N}$ we have
  \[\sum_{i = 1}^k |\mathcal{S}_i|\leq c k,\]
  then the sequence $(x_n)_{n\in\mathbb{N}}$ is quasi-normal.
\end{proposition}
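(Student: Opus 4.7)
The goal is to produce a witness function $f$ satisfying the conditions of Definition~\ref{def:quasi_normal}. Since the block structure $\mathcal{S}_1\mathcal{R}_1\mathcal{S}_2\mathcal{R}_2\cdots$ and the fixed length $L$ of each $\mathcal{R}_k$ are already part of the hypothesis, the only thing left to check is the divergence condition $\sum_{k} 1/(r_k f(r_k)) = \infty$. The plan is therefore to first translate the hypothesis $\sum_{i=1}^{k}|\mathcal{S}_i|\le ck$ into a linear upper bound on the starting indices $r_k$, and then to pick a function $f$ whose growth rate is tuned so that the series diverges while $f(r_k) \to \infty$.

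First I would simply count the terms preceding $\mathcal{R}_k$. Since $\mathcal{R}_k$ is reached only after passing through $\mathcal{S}_1,\mathcal{R}_1,\ldots,\mathcal{S}_{k-1},\mathcal{R}_{k-1},\mathcal{S}_k$, the starting index is
\[ r_k \;=\; 1 + \sum_{i=1}^{k} |\mathcal{S}_i| + (k-1)L. \]
Plugging in the hypothesis $\sum_{i=1}^{k}|\mathcal{S}_i|\le ck$ gives the key estimate $r_k \le (c+L)k + 1 \le Ak$ for some constant $A>0$ (absorbing the $+1$ by slightly enlarging $A$, or only for $k\ge 1$). In particular $r_k\to\infty$ and $r_k$ grows at most linearly in $k$.

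Next I would take any increasing function $f\colon\mathbb{N}\to(0,\infty)$ for which $\sum_n 1/(nf(n))$ diverges while $f(n)\to\infty$; the canonical choice is $f(n) \coloneqq \log(n+e)$. Monotonicity of $f$ and the bound $r_k\le Ak$ yield $f(r_k)\le f(Ak) = \log(Ak+e)$, so
\[ \sum_{k=1}^{\infty} \frac{1}{r_k\, f(r_k)} \;\ge\; \sum_{k=1}^{\infty} \frac{1}{A k \log(Ak+e)} \;=\; \infty, \]
where the divergence on the right follows from the integral test applied to $x\mapsto 1/(x\log x)$. Since $r_k\to\infty$ and $f$ tends to infinity, $\lim_{r_k\to\infty} f(r_k)=\infty$ is automatic. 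With such an $f$ in hand, $(x_n)_{n\in\mathbb{N}}$ meets every requirement of Definition~\ref{def:quasi_normal}.

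There is no real obstacle here: the whole argument is a one-line counting of block positions followed by an appeal to the standard divergence of $\sum 1/(k\log k)$. The only judgement call is the choice of $f$, and that choice is dictated by the linear growth of $r_k$; any function that is just slow enough to keep $\sum 1/(nf(n))$ divergent will work.
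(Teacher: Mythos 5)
Your proof is correct and follows essentially the same route as the paper's: both count positions to get $r_k = 1 + (k-1)L + \sum_{i=1}^k|\mathcal{S}_i| \le (L+c)k + 1$ and then choose a logarithmic $f$ so that the series reduces to the divergent $\sum 1/(k\log k)$. The only cosmetic difference is your choice $f(n) = \log(n+e)$ defined on all of $\mathbb{N}$ versus the paper's $f(r_k) = \log k$, which is in fact slightly less careful since it gives $f(r_1)=0\notin(0,\infty)$.
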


  However, from a topological point of view, as the author showed in \cite{thimm2023OnAMeager}, the equivalent statement to the measure theoretic result of $\mathcal{N}_0$ being large does not hold, in fact, the set $\mathcal{N}_0$ turns out to be meager. Apparently, the strategy of Melo, da Cruz Neto, and de Brito cannot be transferred to the topological setting.\\

The current work instead shows directly that the set of quasi-normal sequences is a large subset and analyzes exactly which aspects of $\mathcal{N}_0$ make it a small subset. Proving co-$\sigma$-porosity of $\mathcal{N}$ could however not be achieved. Instead we were able to prove a similar result using a weaker version of porosity, namely the notion of $\phi$-lower porosity where the size of the ``holes'' is still controlled, but it no longer needs to be linear with respect to the distance. This weaker notion of porosity can be found for example in Definition~2.1 in~\cite{Zajicek}.\\

\begin{definition}[($\sigma$-)$\phi$-porous set]
Let $(X, \rho)$ be a metric space, $A\subseteq X$, $x\in A$, $\eta>0$ and $\phi\colon (0,\eta)\to (0,\infty)$ be an increasing function. Given $r>0$, we define
\[
  \gamma(x,r,A) \coloneqq \sup \{s>0 \colon  \exists y \in X \colon B(y,s)\subseteq B(x,r)\setminus A\},
\]
where we use the convention that $\sup\emptyset = - \infty$. The set $A$ is called \emph{$\phi$-lower porous at $x$} if
\[
  \liminf_{r\to 0} \frac{\phi(\gamma(x,r,A))}{r} > 0.
\]
A set $A$ is called $\phi$-lower porous if it is $\phi$-lower porous at all points $x\in A$. Since we do not use the notion of $\phi$-upper porosity, we refer to $\phi$-lower porous sets simply as \mbox{\emph{$\phi$-porous sets}}. Again, we call a set $\sigma$-$\phi$-porous, if it is a countable union of $\phi$-porous sets. Also, we call a set co-$\psi$-porous or co-$\sigma$-$\phi$-porous if it is the complement of a $\phi$-porous or $\sigma$-$\phi$-porous set, respectively.
\end{definition}
We will heavily use the following characterization of $\phi$-porous sets due to M.~Dymond.
\begin{lemma}[Lemma~2.2 in~\cite{Dymond2022}]\label{lem:PorousMichael}
  Let $(X, \rho)$ be a metric space without isolated points, $A\subseteq X$, $x\in A$, $\eta>0$ and
  \[
    \phi\colon (0,\eta) \to (0,\infty)
  \]
  be a strictly increasing, concave function with $\lim_{t\to 0} \phi(t)=0$. Then $A$ is $\phi$-porous at $x$ if and only if there are an $\varepsilon_0>0$ and an $\alpha\in(0,1)$ such that for every $\varepsilon\in (0,\varepsilon_0)$ there is a $y\in X\setminus A$ such that $0<\rho(x,y)\leq \varepsilon$ and $B(y,\phi^{-1}(\alpha \varepsilon)) \cap A = \emptyset$, or put differently
  \[\textnormal{ $A \, \phi\textnormal{-porous at } x:\Leftrightarrow$ \resizebox{11.2cm}{!}{$\exists \varepsilon_0>0:\exists \alpha \in (0,1): \forall \varepsilon \in (0,\varepsilon_0): \exists y\in X \setminus A: \LAnd{\rho(x,y) \leq \varepsilon \\ B(y, \phi^{-1}(\alpha\varepsilon))\cap A = \emptyset}$\hspace{-4mm}.}}\]
\end{lemma}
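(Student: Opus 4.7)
The plan is to prove the two implications separately. The main structural asymmetry I must navigate is that $\gamma(x,r,A)$ demands a ball $B(y,s)\subseteq B(x,r)\setminus A$, whereas the characterization only asks for $\rho(x,y)\leq\varepsilon$ and places no constraint on whether the excluded ball $B(y,\phi^{-1}(\alpha\varepsilon))$ actually sits inside $B(x,\varepsilon)$. Bridging this gap is where the hypotheses that $\phi$ is concave with $\lim_{t\to 0}\phi(t)=0$ will enter.

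For the forward direction ($\phi$-porosity implies the characterization), I would set $c:=\liminf_{r\to 0}\phi(\gamma(x,r,A))/r>0$, pick $\varepsilon_0>0$ so that $\phi(\gamma(x,\varepsilon,A))>\tfrac{c}{2}\varepsilon$ for every $\varepsilon<\varepsilon_0$, and take any $\alpha\in(0,\min\{c/2,1\})$. Strict monotonicity of $\phi^{-1}$ gives $\gamma(x,\varepsilon,A)>\phi^{-1}(\alpha\varepsilon)$; the definition of supremum then furnishes a center $y$ and a radius $s>\phi^{-1}(\alpha\varepsilon)$ with $B(y,s)\subseteq B(x,\varepsilon)\setminus A$. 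Verifying the desired properties is then routine: $y\in X\setminus A$ (hence $\rho(x,y)>0$, since $x\in A$), $\rho(x,y)<\varepsilon$, and $B(y,\phi^{-1}(\alpha\varepsilon))\subseteq B(y,s)$ is disjoint from $A$.

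For the backward direction, given small $r>0$ I would apply the hypothesis at $\varepsilon=r/2$, producing $y\in X\setminus A$ with $\rho(x,y)\leq r/2$ and $B(y,\phi^{-1}(\alpha r/2))\cap A=\emptyset$, and then set $s:=\min\{\phi^{-1}(\alpha r/2),\,r/2\}$. A triangle inequality check shows $B(y,s)\subseteq B(x,r)\setminus A$, so $\gamma(x,r,A)\geq s$. If $s=\phi^{-1}(\alpha r/2)$, then $\phi(\gamma(x,r,A))/r\geq\alpha/2$ falls out immediately. The delicate case, and the main obstacle, is $s=r/2$: here $\phi(r/2)\leq\alpha r/2$, and the quotient $\phi(s)/r$ collapses to $\phi(r/2)/r$, which is not obviously bounded below. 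I plan to control it using the two standard consequences of concavity through the origin, namely $\phi(\lambda t)\geq\lambda\phi(t)$ for $\lambda\in(0,1)$, which yields $\phi(r/2)\geq\tfrac{1}{2}\phi(r)$, and the monotone decrease of $t\mapsto\phi(t)/t$, which gives $\phi(r)/r\geq\phi(\eta')/\eta'$ for $r\leq\eta'$ with any fixed $\eta'<\eta$. Combining these produces a uniform positive lower bound on $\phi(\gamma(x,r,A))/r$ for all sufficiently small $r$, establishing $\liminf_{r\to 0}\phi(\gamma(x,r,A))/r>0$.
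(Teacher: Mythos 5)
The paper does not actually prove this lemma --- it is quoted from reference \cite{Dymond2022} and invoked as a black box --- so there is no in-paper proof to compare against. Assessed on its own merits, your sketch is correct and captures the genuine content of the equivalence. The forward direction is routine once you note that the $\liminf$ hypothesis gives $\phi(\gamma(x,\varepsilon,A))>\alpha\varepsilon$ for small $\varepsilon$, hence $\gamma(x,\varepsilon,A)>\phi^{-1}(\alpha\varepsilon)$ by strict monotonicity, and the definition of $\gamma$ as a supremum then supplies a center $y$ and radius $s>\phi^{-1}(\alpha\varepsilon)$ with $B(y,s)\subseteq B(x,\varepsilon)\setminus A$.

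The backward direction is where the real work lies, and you have located it precisely. Taking $s=\min\{\phi^{-1}(\alpha r/2),\,r/2\}$ makes the triangle-inequality containment $B(y,s)\subseteq B(x,r)\setminus A$ automatic, and the case $s=\phi^{-1}(\alpha r/2)$ is immediate. In the remaining case $s=r/2$, your two facts --- $\phi(\lambda t)\geq\lambda\phi(t)$ for $\lambda\in(0,1)$ and the monotone decrease of $t\mapsto\phi(t)/t$ --- do both follow from concavity together with $\lim_{t\to 0}\phi(t)=0$ (the first by a limiting convex-combination argument as the second endpoint tends to $0$, the second as a corollary of the first), and they yield $\phi(\gamma(x,r,A))/r\geq\phi(r/2)/r\geq\tfrac12\,\phi(\eta')/\eta'>0$ uniformly for $r\leq\eta'<\eta$, which is exactly what is needed. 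Two small points a fully written version should make explicit: one must note that $\gamma(x,r,A)$ lies in $(0,\eta)$ (or extend $\phi$ monotonically past $\eta$) so that $\phi(\gamma(x,r,A))$ is defined, which is unproblematic for small $r$; and one should shrink $\varepsilon_0$ so that $\alpha\varepsilon$ stays in the range of $\phi$, so that $\phi^{-1}(\alpha\varepsilon)$ makes sense. You do not invoke the no-isolated-points hypothesis, and indeed your argument does not appear to need it beyond guaranteeing that the quantities in the definition are well defined.
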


This relatively new characterization not only proved to be useful in \cite{Dymond2022} but also in \cite{bargetz2023generic}.\\

Using this characterization we were able to prove our main result, Theorem \ref{thm:quasi_normals_co_sigma_phi_porous}, which states that the set of quasi-normal sequences is contained in a co-$\sigma$-$\phi$-porous subset of $K$ with respect to a metric generating the product topology on $K$.\\

Furthermore, in Section \ref{sec:ext} we are able to exactly pinpoint why $\mathcal{N}_0$ is a meager subset and formulate similar, more general subsets that are residual. Lastly, in Section \ref{sec:finite_index_occurrence} we show that almost all sequences $x\in K$ contain all indices in $I$ infinitely often.

\section{Main Result}
In order to prove our main result, Theorem \ref{thm:quasi_normals_co_sigma_phi_porous}, we will first simplify the rather complicated Definition \ref{def:quasi_normal} of the quasi-normal sequences $\mathcal{N}$. Following some preparation, we are able to state Lemma \ref{lem:quasiNormalFullSimplified}, which gives us a characterization of quasi-normal sequences that eliminates the use of the function $f$ of the quasi-normal definition and also uses a constructive way of defining a partition instead of merely demanding the existence of a certain partition.

\subsection{Simplifying the Quasi-Normal Definition}
\begin{proposition}\label{prop:quasiNormalSimplified}
  A sequence $(x_n)_{n\in\mathbb{N}}\in K$ is quasi-normal if and only if there exists an $L\in\mathbb{N}$ and a sequence of disjoint blocks $(\mathcal{R}_k)_{k\in\mathbb{N}}$ of consecutive elements of $(x_n)_{n\in\mathbb{N}}$ with $L$ terms, where each block $\mathcal{R}_k$ contains every element of $I$  such that
  \[\sum_{k\in\mathbb{N}} \frac{1}{r_k} = \infty,\]
  where $x_{r_k}$ is the first element of the block $\mathcal{R}_k$ and $(r_k)_{k\in\mathbb{N}}$ is an increasing sequence.
\end{proposition}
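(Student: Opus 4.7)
The plan is to show the two implications separately, with the same partition $(\mathcal{R}_k)_{k\in\mathbb{N}}$ serving both directions; only the function $f$ has to be added or removed. The key observation is that we are essentially asked to prove a classical fact about positive divergent series.

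First I would handle the easy direction: assume $(x_n)_{n\in\mathbb{N}}$ is quasi-normal and let $L$, $(\mathcal{R}_k)_{k\in\mathbb{N}}$, $(r_k)_{k\in\mathbb{N}}$ and $f\colon \mathbb{N}\to(0,\infty)$ be as in Definition~\ref{def:quasi_normal}. Since $\lim_{r_k\to\infty} f(r_k) = \infty$, there exists $k_0\in\mathbb{N}$ such that $f(r_k)\geq 1$ for every $k\geq k_0$. Consequently,
\[
  \sum_{k\geq k_0} \frac{1}{r_k} \geq \sum_{k\geq k_0} \frac{1}{r_k f(r_k)} = \infty,
\]
which gives $\sum_{k\in\mathbb{N}} 1/r_k = \infty$ with the \emph{same} partition.

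For the other direction I would use the following classical refinement lemma, which I would state and prove in a line or two: for any sequence $(a_k)_{k\in\mathbb{N}}$ of positive real numbers with $\sum_k a_k = \infty$, there exists an increasing unbounded sequence $(g_k)_{k\in\mathbb{N}}$ of positive reals with $\sum_k a_k/g_k = \infty$. The standard argument is to pick indices $N_0 < N_1 < \dots$ so that the partial sums $s_{N_j} = \sum_{k\leq N_j} a_k$ satisfy $s_{N_j} - s_{N_{j-1}} \geq 2^j$, and set $g_k = j$ for $N_{j-1} < k \leq N_j$; then each block contributes at least $2^j/j$ to $\sum a_k/g_k$.

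Applied to $a_k = 1/r_k$ this yields an increasing unbounded sequence $(g_k)_{k\in\mathbb{N}}\subseteq (0,\infty)$ with $\sum_k 1/(r_k g_k) = \infty$. Since the $r_k$ are strictly increasing, I can unambiguously define $f\colon\mathbb{N}\to(0,\infty)$ by $f(r_k)\coloneqq g_k$ and, say, $f(n)\coloneqq 1$ for $n\notin\{r_k:k\in\mathbb{N}\}$. Then $\lim_{r_k\to\infty} f(r_k) = \lim_{k\to\infty} g_k = \infty$ and $\sum_k 1/(r_k f(r_k)) = \infty$, so the same partition $(\mathcal{R}_k)_{k\in\mathbb{N}}$ together with $f$ witnesses quasi-normality in the sense of Definition~\ref{def:quasi_normal}.

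The only step that is not entirely bookkeeping is the refinement lemma, but it is completely classical and does not interact with the combinatorial structure of the blocks $\mathcal{R}_k$ at all. I therefore expect no real obstacle; the value of the proposition is that it lets later arguments work with the cleaner condition $\sum_k 1/r_k = \infty$ instead of carrying an auxiliary function $f$ around.
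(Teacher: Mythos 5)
Your proof is correct and follows essentially the same route as the paper's. The forward direction (quasi-normal $\Rightarrow$ $\sum 1/r_k = \infty$) is identical: drop the factor $f(r_k)$ once $f(r_k)\geq 1$. For the converse, the paper also groups the indices into consecutive blocks and sets $f$ constant on each block; the only cosmetic difference is that the paper picks block boundaries $(k_l)$ so that $\sum_{k=k_l}^{k_{l+1}-1} 1/r_k \geq l$ (giving a contribution of at least $1$ per block after dividing by $f(r_k)=l$), whereas you ask for block sums $\geq 2^j$ (giving contributions $\geq 2^j/j$). Both choices make the refined series diverge, and both exploit only the divergence of $\sum 1/r_k$; no interaction with the combinatorics of the blocks $\mathcal{R}_k$ is needed, as you rightly observe. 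Your explicit remark that $f$ can be set to $1$ off the set $\{r_k:k\in\mathbb{N}\}$ is a small bookkeeping detail the paper leaves implicit, since the divergence hypothesis in Definition~\ref{def:quasi_normal} only constrains $f$ on the values $r_k$.
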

\begin{proof}
  We first show that every quasi-normal sequence satisfies the conditions of the proposition. Let $x$ be quasi-normal and let $(r_k)_{k\in\mathbb{N}}$ and $f$ be the respective quantities from the definition of quasi-normality. In particular, we have that $\lim_{t\to\infty}f(t) = \infty$ and
  \[\sum_{k\in\mathbb{N}}\frac{1}{r_k \cdot f(r_k)} = \infty.\]
  Now, since $f$ tends towards infinity, we may define the smallest index $k_f$ after which all $f(r_j)$ for $j \geq k_f$ are greater or equal to $1$,
  \[k_f = \min\{k\in\mathbb{N} \colon \forall j\geq k \colon f(r_j) \geq 1\}.\]
  Then we have
  \begin{align*}
    \infty = \sum_{k\in\mathbb{N}} \frac{1}{r_k f(r_k)}
           = \sum_{k = 1}^{k_f-1} \frac{1}{r_k f(r_k)} + \sum_{k = k_f}^{\infty} \frac{1}{r_k f(r_k)}
           \leq \sum_{k = 1}^{k_f-1} \frac{1}{r_k f(r_k)} + \sum_{k = k_f}^{\infty} \frac{1}{r_k},
  \end{align*}
  and in particular that $\sum_{k\in\mathbb{N}}\frac{1}{r_k} = \infty$.\\

  For the other direction let $x\in K$ and $(\mathcal{R}_k)_{k\in \mathbb{N}}$ be a partition with starting indices $(r_k)_{k\in\mathbb{N}}$ satisfying the condition of the proposition. Since $\sum_{k\in \mathbb{N}}\frac{1}{r_k} = \infty$, we can choose a sequence $(k_l)_{l\in\mathbb{N}}$ of positive integers such that $k_1=1$ and
  \[\forall l\in\mathbb{N} \colon \sum_{k = k_l}^{k_{l+1}-1} \frac{1}{r_k} \geq l.\]
  Note that this is equivalent to
  \[\forall l\in\mathbb{N} \colon \sum_{k = k_l}^{k_{l+1}-1} \frac{1}{r_k \cdot l} \geq 1.\]
  Now, define $f$ by $f(r_k) = l$ for $k_l \leq k < k_{l+1}$ and observe that it is divergent. We then have
  \[\sum_{k\in\mathbb{N}} \frac{1}{r_k f(r_k)} = \sum_{l\in\mathbb{N}} \sum_{k = k_l}^{k_{l+1}-1} \frac{1}{r_k f(r_k)} \geq \sum_{l\in \mathbb{N}} 1 = \infty.\]
  Hence, $x$ is quasi-normal.
\end{proof}
Now that we have simplified the definition of the quasi-normal sequences, we are able to define a unique partition type that will maximize the sum in the previous proposition.
\begin{definition}[Greedy $L$-partition]
  Let $L\in\mathbb{N}$ with $L\geq |I|$ be given. Furthermore, let $x\in K$. Let
  \[r_1 \coloneqq \min\{r \in\mathbb{N}\colon \{x_r, x_{r+1},\dots,x_{r+L-1}\} = I\},\]
  and for $k\in \mathbb{N}$, $k\geq 2$ let
  \[r_k \coloneqq \min\{r \in\mathbb{N}\colon r > r_{k-1}+L-1 \wedge \{x_r, x_{r+1},\dots,x_{r+L-1}\} = I\}\]
  provided that these minima exist. This results in the blocks
  \[\mathcal{R}_k = (x_{r_k},\dots,x_{r_{k+L-1}}),\quad k\in\mathbb{N}.\]
  We call this partition of $x$ the greedy $L$-partition. Note that by definition every block $\mathcal{R}_k$ contains all elements of $I$. If any of the above minima does not exist, we say that the greedy $L$-partition of $x$ does not exist. In this case, let $\tilde{k}$ be the largest $k\in\mathbb{N}$ for which $r_k$ is well defined. We then say that the greedy $L$-partition of $x$ exists up until at most block $\tilde{k}$. Since for given $x$ and $L$ the partition is fully determined by the starting indices $(r_k)_{k\in\mathbb{N}}$ of $(\mathcal{R}_k)_{k\in\mathbb{N}}$, we will interchangeably instead of the partition itself refer to $(r_k)_{k\in\mathbb{N}}$ as the greedy $L$-partition of $x$.
\end{definition}
The next proposition shows in what sense the greedy $L$-partition is greedy.
\begin{proposition}\label{prop:greedy_L_partition_is_greedy}
  Let $L\in\mathbb{N}$ with $L\geq N$ be given. Furthermore, let $x\in K$ and $(r_k)_{k\in\mathbb{N}}$ be the indices of the greedy $L$-partition of $x$, and $(\tilde{r}_k)_{k\in\mathbb{N}}$ a partition of $x$ different from the former with disjoint blocks of length $L$ in which all elements of $I$ occur. Then we have
  \[\sum_{k\in\mathbb{N}} \frac{1}{r_k} \geq \sum_{k\in\mathbb{N}}\frac{1}{\tilde{r}_k}.\]
  If the right hand side converges we even get a strict inequality.
\end{proposition}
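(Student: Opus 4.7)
The plan is to prove the term-wise inequality $r_k \leq \tilde r_k$ for every $k\in\mathbb{N}$ by induction on $k$, and then deduce the two claims of the proposition from it.

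For the base case, note that both $r_1$ and $\tilde r_1$ are starting indices of length-$L$ blocks of $x$ containing every element of $I$; by definition $r_1$ is the smallest such index, so $r_1 \leq \tilde r_1$. For the inductive step, assume $r_k \leq \tilde r_k$. Since $(\tilde r_k)$ has disjoint blocks of length $L$, we have $\tilde r_{k+1} \geq \tilde r_k + L \geq r_k + L > r_k + L - 1$, and the block $(x_{\tilde r_{k+1}}, \dots, x_{\tilde r_{k+1}+L-1})$ contains every element of $I$. Thus $\tilde r_{k+1}$ is a candidate in the set defining $r_{k+1}$, and by minimality $r_{k+1} \leq \tilde r_{k+1}$.

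From $r_k \leq \tilde r_k$ we obtain $\frac{1}{r_k} \geq \frac{1}{\tilde r_k}$ termwise, which gives the desired inequality $\sum_{k\in\mathbb{N}}\frac{1}{r_k} \geq \sum_{k\in\mathbb{N}}\frac{1}{\tilde r_k}$. For the strict inequality under the assumption that the right-hand side converges, observe that since the partition is fully determined by its starting indices, the assumption $(r_k)_{k\in\mathbb{N}} \neq (\tilde r_k)_{k\in\mathbb{N}}$ implies the existence of some $k_0\in\mathbb{N}$ with $r_{k_0} < \tilde r_{k_0}$, and thus $\frac{1}{r_{k_0}} - \frac{1}{\tilde r_{k_0}} > 0$. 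Combined with the termwise non-negativity of all other differences, this yields
\[\sum_{k\in\mathbb{N}} \frac{1}{r_k} - \sum_{k\in\mathbb{N}}\frac{1}{\tilde r_k} \geq \frac{1}{r_{k_0}} - \frac{1}{\tilde r_{k_0}} > 0,\]
where the rearrangement into a difference of series is justified by convergence of the right-hand series.

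The only point that requires genuine care is the inductive step, where one must verify that the greedy choice cannot be ``overtaken'' by an alternative partition; this is precisely where disjointness of the blocks of $(\tilde r_k)$ and the fact that each such block contains all of $I$ are both used. Everything else is a straightforward consequence of the monotonicity of $t\mapsto 1/t$.
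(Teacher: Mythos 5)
Your proof is correct and follows essentially the same strategy as the paper: show $r_k\leq\tilde r_k$ for all $k$ by induction and conclude by termwise comparison, with strictness at some index $k_0$ since the two index sequences differ. Your inductive step is in fact slightly cleaner than the paper's — you verify directly that $\tilde r_{k+1}$ is a candidate in the minimization defining $r_{k+1}$ (using $\tilde r_{k+1}\geq\tilde r_k+L\geq r_k+L>r_k+L-1$ and that $\tilde r_{k+1}$ starts a block containing all of $I$), whereas the paper makes a superfluous case distinction on whether the $(m+1)$-th greedy block starts inside or after the $m$-th block of $(\tilde r_k)$.
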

\begin{proof}
  As all $r_k$ are chosen minimally, we have that $r_k\leq \tilde{r}_k$. To further elaborate on this we use induction: Consider $r_1$ and $\tilde{r}_1$. Here it is clear that $r_1 \leq \tilde{r_1}$, since $r_1$ marks the beginning of the first possible block of length $L$ containing all elements of $I$. Now assume that for $n\leq m\in\mathbb{N}$ we have $r_n\leq\tilde{r}_n$. Then $r_{m}$ marks the beginning of the $m$-th block of the greedy $L$-partition. This block extends up to index $r_{m+L-1}$. Now since $r_m\leq\tilde{r}_m$ we have that the $m$-th block in the partition $(\tilde{r}_k)_{k\in\mathbb{N}}$ extends up to index $\tilde{r}_m+L-1$ and thus the same or farther than the $m$-th block of $(r_k)_{k\in\mathbb{M}}$. Since in the greedy $L$-partition $(r_k)_{k\in\mathbb{N}}$ the $(m+1)$-th block is chosen to begin after index $r_{m}+L-1$ with minimal distance, it may already start in a part that the $m$-th block of $(\tilde{r}_k)_{k\in\mathbb{N}}$ still occupies. If this is the case, then clearly $r_{m+1} \leq \tilde{r}_{m+1}$. In the case that the $(m+1)$-th block of $(r_k)_{k\in\mathbb{N}}$ begins after the $m$-th block of $(\tilde{r}_k)_{k\in\mathbb{N}}$, then by the greedy nature of $(r_k)_{k\in\mathbb{N}}$ we again have that $r_{m+1}\leq\tilde{r}_{m+1}$. It follows that indeed $r_k\leq \tilde{r}_k$ for all $k\in\mathbb{N}$.\\

  Since $(r_k)_{k\in\mathbb{N}}\neq (\tilde{r}_k)_{k\in \mathbb{N}}$, we know that for at least one $k\in\mathbb{N}$ we have that $r_k < \tilde{r}_k$. This directly implies the result.
\end{proof}
The previous proposition again allows us to reformulate the definition of quasi-normal sequences.
\begin{lemma}\label{lem:quasiNormalFullSimplified}
  A sequence $x=(x_n)_{n\in\mathbb{N}}\in K$ is quasi-normal if and only if there exists an $L\in\mathbb{N}$ such that the greedy $L$-partition $(r_k)_{k\in\mathbb{N}}$ of $x$ exists and
  \[\sum_{k\in\mathbb{N}} \frac{1}{r_k} = \infty.\]
\end{lemma}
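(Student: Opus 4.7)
The plan is to deduce Lemma~\ref{lem:quasiNormalFullSimplified} directly from Proposition~\ref{prop:quasiNormalSimplified} by using Proposition~\ref{prop:greedy_L_partition_is_greedy} to compare an arbitrary qualifying partition with the greedy one.

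For the backward implication, suppose $L \in \mathbb{N}$ is such that the greedy $L$-partition $(r_k)_{k\in\mathbb{N}}$ of $x$ exists and $\sum_{k\in\mathbb{N}} 1/r_k = \infty$. By definition, the greedy $L$-partition is itself a sequence of disjoint blocks of $L$ consecutive elements each containing every element of $I$. Thus $x$ satisfies the hypotheses of Proposition~\ref{prop:quasiNormalSimplified} and is therefore quasi-normal.

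For the forward implication, suppose $x$ is quasi-normal. By Proposition~\ref{prop:quasiNormalSimplified} there exist $L \in \mathbb{N}$ and a sequence of disjoint blocks of length $L$ with starting indices $(\tilde{r}_k)_{k\in\mathbb{N}}$, each block containing all of $I$, such that $\sum_{k\in\mathbb{N}} 1/\tilde{r}_k = \infty$. I first need to verify that the greedy $L$-partition of $x$ exists, i.e.\ that each defining minimum is attained. This is established by the same induction used in the proof of Proposition~\ref{prop:greedy_L_partition_is_greedy}: $r_1 \leq \tilde{r}_1$ since $\tilde{r}_1$ itself is a candidate for the minimum defining $r_1$; and if $r_k \leq \tilde{r}_k$ has been shown for $k \leq m$, then $\tilde{r}_{m+1} > \tilde{r}_m + L - 1 \geq r_m + L - 1$ (because the blocks $(\tilde{r}_k)$ are disjoint), so $\tilde{r}_{m+1}$ is a valid candidate for the minimum defining $r_{m+1}$, forcing $r_{m+1} \leq \tilde{r}_{m+1}$. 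Hence $r_k$ is well defined for every $k \in \mathbb{N}$ and $r_k \leq \tilde{r}_k$.

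Finally, if $(r_k)_{k\in\mathbb{N}} = (\tilde{r}_k)_{k\in\mathbb{N}}$ then $\sum_k 1/r_k = \sum_k 1/\tilde{r}_k = \infty$ trivially, while if the two sequences differ then Proposition~\ref{prop:greedy_L_partition_is_greedy} applies directly and yields $\sum_k 1/r_k \geq \sum_k 1/\tilde{r}_k = \infty$. In either case the greedy $L$-partition satisfies the claimed divergence, completing the proof. The only subtle point—and the step I expect to be the slight sticking point—is the existence verification, since Proposition~\ref{prop:greedy_L_partition_is_greedy} was stated under the assumption that both partitions exist; this is why I extract the comparison $r_k \leq \tilde{r}_k$ inductively from the proof of that proposition rather than quoting the statement itself.
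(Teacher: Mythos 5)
Your proof is correct and matches the intent of the paper, which states the lemma as an immediate consequence of Propositions~\ref{prop:quasiNormalSimplified} and~\ref{prop:greedy_L_partition_is_greedy} without giving an explicit argument. You are right that the one nontrivial point is verifying the greedy $L$-partition \emph{exists} before invoking Proposition~\ref{prop:greedy_L_partition_is_greedy}, and your inductive argument (that $\tilde{r}_1$ witnesses the first minimum is attained, and that disjointness of the $\tilde{\mathcal{R}}_k$ gives $\tilde{r}_{m+1} > \tilde{r}_m + L - 1 \geq r_m + L - 1$, making $\tilde{r}_{m+1}$ a valid candidate for $r_{m+1}$) handles this cleanly and in fact fills a small gap the paper leaves implicit.
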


\subsection{Proving the Main Theorem}
We are going to show that the set of quasi-normal sequences $\mathcal{N}$ contains a co-$\sigma$-$\phi$-porous subset, that is, that the complement $K\setminus\mathcal{N}$ is contained in a $\sigma$-$\phi$-porous subset. For this let us for $L\in\mathbb{N}$ with $L\geq N$ and $M\in\mathbb{N}$ define the set
\[\mathcal{A}_{L,M}\coloneqq \left\{x\in K\colon \textnormal{greedy $L$-partition $(r_k)_{k\in\mathbb{N}}$ of $x$ exists and} \sum_{k\in\mathbb{N}} \frac{1}{r_k} < \frac{1}{L}\log(M) \right\}\]
and for $k\in\mathbb{N}$ the set
\[\mathcal{B}_{L,k} \coloneqq \{x\in K \colon \textnormal{greedy $L$-partition of $x$ exists up until at most block $k$}\}.\]
A sequence is not quasi-normal if and only if for all $L \geq N$ either the greedy $L$-partition $(r_k)_{k\in\mathbb{N}}$ exists and produces a sum $\sum_{k=1}^\infty \frac{1}{r_k}$ that is finite or if the greedy $L$-partition does not exist. Therefore, we may write
\begin{equation}\label{eq:decomposition_N_complement}
  K\setminus\mathcal{N} = \bigcap_{\substack{L\in\mathbb{N}\\L\geq N}}\left(\bigcup_{M\in\mathbb{N}} \mathcal{A}_{L,M} \cup \bigcup_{k\in\mathbb{N}} \mathcal{B}_{L,k}\right).
\end{equation}
We observe that $K\setminus\mathcal{N}$ is contained in a $\sigma$-$\phi$-porous subset if already the set
\[\bigcup_{M\in\mathbb{N}} \mathcal{A}_{N,M} \cup \bigcup_{k\in\mathbb{N}} \mathcal{B}_{N,k}\]
is $\sigma$-$\phi$-porous or rather if $\mathcal{A}_{M}\coloneqq \mathcal{A}_{N,M}$ and $\mathcal{B}_{k}\coloneqq \mathcal{B}_{N,k}$ are $\phi$-porous for all $M,k\in\mathbb{N}$.\\

Let us choose $\phi\colon (0,1) \to (0,1)$ implicitly by defining its inverse
\[\phi^{-1}(t) = t^{\frac{1}{t}}.\]
Writing $\phi$ explicitly is unnecessarily difficult, which is why we choose to only give $\phi$ implicitly.
Note that $\phi^{-1}$ is continuous and strictly increasing on $(0,1)$ since for every $t\in(0,1)$ we have that
\[\mathrm{D}\phi^{-1}(t) = - t^{\frac{1}{t}-2}(\log(t)-1) >0.\]
Also we note that $\lim_{t\to 0} \phi^{-1}(t) = 0$ and $\phi^{-1}(1) = 1$. We conclude that $\phi^{-1}$ is bijective on $(0,1)$, and hence, that $\phi^{-1}$ indeed implicitly defines a function $\phi\colon (0,1) \to (0,1)$.
Since for $t<\frac{1}{3}$ we have
\[\mathrm{D}^2 \phi^{-1}(t) = t^{\frac{1}{t}-4} \left(-3 t+\log ^2(t)+2 (t-1) \log (t)+1\right)>0,\]
we know that $\phi^{-1}$ is convex on $(0,\frac{1}{3})$, and hence, that $\phi$ is concave. Furthermore, since $\phi^{-1}$ is convex and since $\phi^{-1}(t)<t$ for $t<1$ we conclude that also $\phi$ is strictly increasing. Lastly, we note that since $\lim_{t\to 0} \phi^{-1}(t) = 0$ we have $\lim_{s\to0} \phi(s) = 0$. These observations show that $\phi$ is a function suitable for $\phi$-porosity.
\begin{lemma}\label{lem:AM_phi_porous}
  For every $M\in\mathbb{N}$ the set $\mathcal{A}_{M}$ is $\phi$-porous.
\end{lemma}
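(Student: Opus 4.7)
The plan is to verify the characterization of $\phi$-porosity given in Lemma~\ref{lem:PorousMichael} pointwise on $\mathcal{A}_M$. Fix $x \in \mathcal{A}_M$ (the case $M = 1$ being vacuous, so assume $M \geq 2$). For a small $\varepsilon > 0$ the idea is to first set the ``escape point'' $y$ to agree with $x$ on a prefix of length roughly $j \approx \log_2(1/\varepsilon)$ and then force $y$ outside of $\mathcal{A}_M$ by appending the periodic pattern $1, 2, \dots, N, 1, 2, \dots, N, \dots$ forever. Since every $N$-window fully contained in this tail covers $I$, the greedy $N$-partition of $y$ picks up blocks from an arithmetic progression of common difference $N$ and has divergent sum, so $y \in K \setminus \mathcal{A}_M$. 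The fact that $x \in \mathcal{A}_M$ has finite sum for its greedy partition also prevents $y = x$, giving $\rho(x,y) > 0$, and by construction $\rho(x,y) \leq 2^{-j} \leq \varepsilon$.

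For the second condition, I want to choose $Q \in \mathbb{N}$ so that already the prefix of $y$ of length $j + NQ$ witnesses membership in $K\setminus \mathcal{A}_M$ for every $z$ that agrees with $y$ on that prefix. Write $(r_k)_{k\in\mathbb{N}}$ for the greedy $N$-partition of $z$ and let $k_0$ be the number of blocks of this partition that are entirely contained in positions $1,\dots, j$. Since $y_{j+1}, \dots, y_{j+N} = 1,\dots,N$ covers $I$, the first ``tail-using'' block satisfies $r_{k_0+1} \leq j+1$, and because every $N$-window inside the tail covers $I$, the greedy rule forces $r_{k_0+m} = r_{k_0+1} + (m-1)N \leq j + 1 + (m-1)N$ for as long as the block still fits in the tail of length $NQ$, which holds for at least $m = 1, \dots, Q - 1$. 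Hence
\[\sum_{k\in\mathbb{N}} \frac{1}{r_k} \geq \sum_{m=0}^{Q-2} \frac{1}{j+1+mN} \geq \frac{1}{N}\log\!\left(\frac{j+1+(Q-1)N}{j+1}\right),\]
and choosing $Q = \lceil (M-1)(j+1)/N \rceil + 2$ forces the right-hand side strictly above $\tfrac{1}{N}\log M$. If instead the greedy partition of $z$ terminates inside the fixed prefix, then $z \notin \mathcal{A}_M$ trivially, so in either case $z \notin \mathcal{A}_M$.

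It remains to match scales. With the choice of $Q$ above we have $j + NQ \leq Mj + M - 1 + 3N$, so any $z \in B(y, 2^{-(Mj+M-1+3N)})$ agrees with $y$ on the relevant prefix (by (\ref{smallDistanceImpliesSameEntriesAtBeginning})) and is therefore outside $\mathcal{A}_M$. Taking $\alpha = 1/2$ (any fixed value in $(0,1)$ works) and $j = \lceil \log_2(1/\varepsilon)\rceil$, the requirement $\phi^{-1}(\alpha\varepsilon) \leq 2^{-(Mj + M - 1 + 3N)}$ becomes
\[(\varepsilon/2)^{2/\varepsilon} \leq \varepsilon^M \cdot 2^{1 - 2M - 3N},\]
and the left-hand side decays faster than any fixed polynomial in $\varepsilon$ (its logarithm is asymptotic to $(2/\varepsilon)\log\varepsilon$, versus the $M\log\varepsilon$ of the right-hand side). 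Hence the inequality holds for all $\varepsilon$ below some $\varepsilon_0 > 0$ depending only on $M$ and $N$, completing the verification of Lemma~\ref{lem:PorousMichael}.

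The main obstacle I foresee is the bookkeeping at the interface between $x$'s prefix and $y$'s appended tail: the greedy $N$-partition of $y$ can shift by up to $N-1$ positions compared with the naive partition $j+1, j+N+1, \dots$, because a block may straddle position $j$. I handle this by the slack $+3N$ built into the estimate of $j + NQ$ and by working with the bound $r_{k_0+1} \leq j+1$ rather than an exact value. All other quantitative ingredients are rather soft, since $\phi^{-1}(t) = t^{1/t}$ gives a super-polynomial gap that easily swallows any fixed polynomial loss in $\varepsilon$.
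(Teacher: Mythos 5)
Your proof is correct and follows essentially the same strategy as the paper: verify Dymond's characterization of $\phi$-porosity pointwise by appending a densely packed periodic tail $1,2,\dots,N,1,2,\dots$ to a prefix of $x$, then exploit the super-polynomial decay of $\phi^{-1}(t)=t^{1/t}$ to show that the resulting large harmonic-type sum is already visible within a ball of radius $\phi^{-1}(\alpha\varepsilon)$. The only noteworthy technical difference is that the paper inserts a buffer of $N$ consecutive ones between the prefix and the periodic tail, which forces the first tail block of the greedy $N$-partition to start exactly at position $j+N+1$ and gives an explicit block sequence, whereas you dispense with the buffer and instead argue the weaker but sufficient bound $r_{k_0+1}\leq j+1$; both work, and your parenthetical remark that the partition might ``terminate inside the fixed prefix'' should read ``terminate somewhere,'' since the prescribed tail of $y$ precludes termination within the prefix itself, but either way $z\notin\mathcal{A}_M$.
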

\begin{proof}
  Let $x\in \mathcal{A}_{M}$ and let us choose
  \[\varepsilon_0 \coloneqq \min\left\{\frac{1}{8M}, 2^{-3N-1}\right\},\quad \alpha = 1,\quad \text{and}\quad \varepsilon\in(0,\varepsilon_0).\]
  Choose $j = \lceil -\log_2(\varepsilon)\rceil$ and note that then $2^{-j}<\varepsilon\leq 2^{-j+1}$. We  now define $y\in K$ by
\[y\coloneqq (x_1, x_2,\dots,x_j,\underbrace{1,\dots,1}_N,1,2,\dots,N,1,2,\dots,N,\dots).\]
  By \eqref{smallDistanceImpliesSameEntriesAtBeginning} we have $d(x,y)<\varepsilon$. Observe that the greedy $N$-partition $(r^y_k)_{k\in\mathbb{N}}$ of $y$ exists and that $\sum_{k\in\mathbb{N}}\frac{1}{r^y_k} = \infty$, since somewhere after $x_j$ all blocks are densely packed without spaces in between. Hence, $y\notin \mathcal{A}_{M}$. Now set
  \[m \coloneqq \left\lfloor - \log_2(\varepsilon) \frac{1}{\varepsilon}\right\rfloor. \]
  Note that we have that
  \[
    \phi^{-1}(\alpha \varepsilon) = \varepsilon^{\frac{1}{\varepsilon}} = 2^{\log_2(\varepsilon)\frac{1}{\varepsilon}} \leq 2^{-\left\lfloor-\log_2(\varepsilon)\frac{1}{\varepsilon}\right\rfloor} = 2^{-m},
  \]
  and therefore, that $B(y,\phi^{-1}(\alpha \varepsilon)) \subseteq B(y,2^{-m})$.
  Now assume that $z\in B(y,\phi^{-1}(\alpha \varepsilon))$. If the greedy $N$-partition of $z$ does not exist we immediately conclude that $z\notin \mathcal{A}_M$. In the following we assume that the greedy $N$-partition $(r^z_k)_{k\in\mathbb{N}}$ of $z$ does exist. Since
  \begin{align*}
    m &\geq -\log_2(\varepsilon) \frac{1}{2\varepsilon} \geq -\log_2(\varepsilon)\frac{1}{2^{-3N}}\geq \lceil-\log_2(\varepsilon)\rceil 2^{3N-1}\\
      &>\lceil-\log_2(\varepsilon)\rceil (1+2N) \geq \lceil-\log_2(\varepsilon)\rceil + 2N = j+2N.
  \end{align*}
  and due to \eqref{smallDistanceImpliesSameEntriesAtBeginning} we have that the leading entries of $z$ are
  \[(z_i)_{i=1}^{m} = (\underbrace{x_1,\dots,x_j,\underbrace{1,\dots,1}_{N},\underbrace{1,2,\dots,N, 1,2,\dots,N}_{\text{at least one full block}},\dots}_{m \text{ entries}}).\]
  Next we show that $\sum_{k\in\mathbb{N}} \frac{1}{r^z_k}$ is too large for $z$ to belong to $\mathcal{A}_M$ by estimating a lower bound on it using only the $r^z_k$ corresponding to blocks that fit entirely in the section of consecutive blocks $1,2,\dots,N$. By construction, we know that a block must begin at index $j+N+1$ and all other such blocks follow without spaces immediately after. For the sum we have
  \begin{align*}
    \sum_{k\in\mathbb{N}}\frac{1}{r^z_k} &\geq \sum_{\substack{k\in\mathbb{N} \\ j+N+1 \leq r^z_k \\ r^z_k + N - 1 \leq m}} \frac{1}{r^z_k}
                                         = \sum_{k = 0}^{\left\lfloor\frac{m-(j+N)}{N}\right\rfloor - 1} \frac{1}{j+N+1+kN}
                                         =\sum_{k = 0}^{\left\lfloor\frac{m-(j+N)}{N}\right\rfloor - 1} \frac{1}{j+1+(k+1)N}\\
                                         &>\int_{t = 0}^{\left\lfloor\frac{m-(j+N)}{N}\right\rfloor - 1} \frac{1}{j+1+(t+1)N}
                                         = \frac{1}{N}\log\left(1+\frac{\left(\left\lfloor\frac{m-(j+N)}{N}\right\rfloor - 1\right)N}{1+j+N}\right)\\
                                         &\geq \frac{1}{N}\log\left(1+\frac{\left(\frac{m-(j+N)}{N} - 2\right)N}{1+j+N}\right)
                                         = \frac{1}{N}\log\left(1+\frac{m-j-3N}{1+j+N}\right)\\
                                         &\geq \frac{1}{N}\log\left(1+\frac{m-j-3N}{1+j+3N}\right)
                                         = \frac{1}{N}\log\left(\frac{m}{1+j+3N}\right) \\
                                         &\geq \frac{1}{N}\log\left(\frac{m}{2j}\right)
                                         =\frac{1}{N}\log\left(\frac{\left\lfloor -\log_2(\varepsilon) \frac{1}{\varepsilon}\right\rfloor}{2\lceil-\log_2(\varepsilon)\rceil}\right)
                                         \geq\frac{1}{N}\log\left(\frac{ -\log_2(\varepsilon) \frac{1}{2\varepsilon}}{4(-\log_2(\varepsilon))}\right)\\
                                         &=\frac{1}{N}\log\left(\frac{1}{8\varepsilon}\right) > \frac{1}{N}\log(M),
  \end{align*}
  which implies that $z\notin \mathcal{A}_M$.
  We conclude that $\mathcal{A}_{M}$ is $\phi$-porous.
\end{proof}
\begin{lemma}\label{lem:Bk_porous}
  For every $k\in\mathbb{N}$ the set $\mathcal{B}_{k}$ is porous.
\end{lemma}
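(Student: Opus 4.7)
My plan is to mimic the construction used in the proof of Lemma~\ref{lem:AM_phi_porous}, but this time aiming for plain (linear) porosity with a constant $\alpha$ independent of $\varepsilon$. Fix $x\in\mathcal{B}_k$. For $\varepsilon\in(0,\tfrac{1}{2})$, set $j\coloneqq\lceil-\log_2\varepsilon\rceil$ so that $2^{-j}\leq\varepsilon<2^{-j+1}$, and define
\[
y \coloneqq (x_1,\dots,x_j,\,1,2,\dots,N,\,1,2,\dots,N,\dots).
\]
By \eqref{smallDistanceImpliesSameEntriesAtBeginning} we have $d(x,y)<2^{-j}\leq\varepsilon$. Moreover, the tail of $y$ beyond position $j$ is densely packed with consecutive $N$-windows each covering $I$, so the greedy $N$-partition of $y$ is infinite and in particular $y\notin\mathcal{B}_k$.

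I then propose the porosity constants $\varepsilon_0\coloneqq \tfrac{1}{2}$ and $\alpha\coloneqq 2^{-N(k+1)-1}$, and claim $B(y,\alpha\varepsilon)\cap\mathcal{B}_k=\emptyset$. Using $\varepsilon<2^{-j+1}$, one has $\alpha\varepsilon<2^{-(j+N(k+1))}$, hence any $z\in B(y,\alpha\varepsilon)$ must agree with $y$ on the first $j+N(k+1)$ entries. Within these positions, the windows $[j+iN+1,\,j+(i+1)N]$ for $i=0,1,\dots,k$ give $k+1$ disjoint length-$N$ blocks of $z$, each containing every element of $I$.

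To convert this into information about the greedy $N$-partition $(r^z_i)$ of $z$, I invoke the same monotonicity argument as in the proof of Proposition~\ref{prop:greedy_L_partition_is_greedy}: by induction on $i=1,\dots,k+1$ I obtain $r^z_i\leq j+(i-1)N+1$. The base case holds since the window starting at $j+1$ contains $I$; the inductive step uses $r^z_i+N-1\leq j+iN$, so that the window beginning at $j+iN+1$ is a valid candidate for $r^z_{i+1}$. Consequently $r^z_{k+1}$ is well-defined and lies within the fixed prefix, so the greedy partition of $z$ exists beyond block $k$, giving $z\notin\mathcal{B}_k$ as required.

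The main subtlety I anticipate is precisely this last monotonicity step: the greedy partition of $z$ may perfectly well begin \emph{before} position $j+1$ by exploiting the inherited entries $x_1,\dots,x_j$, so the explicit ``clean'' partition I construct is in general not the greedy one. Care is therefore needed to confirm that the greedy partition still reaches its $(k+1)$-st block within the prefix on which $z$ and $y$ coincide. Once this is settled, the constants $\varepsilon_0$ and $\alpha$ depend only on the fixed parameters $N$ and $k$ (not on $x$ or $\varepsilon$), which is exactly what ordinary porosity, as opposed to $\phi$-porosity, demands.
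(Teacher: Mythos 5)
Your proof is correct. The structure (choose $j$ so that $2^{-j}\leq\varepsilon<2^{-j+1}$, replace the tail of $x$ by a periodic pattern, show every $z$ in a small ball around $y$ has a greedy $N$-partition reaching block $k+1$) matches the paper's, but the detailed argument for why $z\notin\mathcal{B}_k$ is different. The paper inserts a run of $N$ consecutive ones between $x_j$ and the periodic tail; this run acts as a barrier, since no window of length $N$ covering $I$ can start inside it, which forces the greedy partition to ``reset'' at position $j+N+1$, after which the greedy blocks are exactly the marked ones. You instead omit the barrier and control the greedy partition by a direct induction: $r^z_1\leq j+1$, and if $r^z_i\leq j+(i-1)N+1$ then $r^z_i+N-1\leq j+iN$, so $j+iN+1$ is an admissible candidate for $r^z_{i+1}$ and $r^z_{i+1}\leq j+iN+1$. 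This correctly sidesteps the subtlety you flag — the greedy partition of $z$ may start inside the inherited prefix $x_1,\dots,x_j$ and need not align with your explicit blocks — because an \emph{upper} bound on each $r^z_i$ is all that is needed. Your approach is slightly leaner (a smaller prefix needs to be fixed, hence a larger $\alpha$), and it makes explicit the monotonicity idea that the paper's barrier trick hides; the paper's version buys a more transparent ``after the barrier the blocks are exactly here'' picture at the cost of $N$ extra coordinates.
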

\begin{proof}
  Let $x\in \mathcal{B}_{k}$
  \[\varepsilon_0 = 1, \quad  \alpha = 2^{-N-(k+1)N},\quad \text{and} \quad \varepsilon\in(0,\varepsilon_0).\]
  Choose $j = \lceil -\log_2(\varepsilon)\rceil$. We  now define $y\in K$ by
\[
y\coloneqq (x_1,\dots,x_j,\underbrace{1,\dots,1}_N,1,2,\dots,N,1,2,\dots,N,\dots).
\]
  Note that the greedy $N$-partition $Hindawi Publishing Corporation(r^y_k)_{k\in\mathbb{N}}$ of $y$ exists, and hence, $y\notin \mathcal{B}_{L,k}$. Then for every $z\in B(y, \alpha \varepsilon)\subseteq B(y, 2^{-j-N-(k+1)N})$ we have that $z$ has the leading entries
\[
  (z_i)_{i=1}^{j+N+(k+1)N} = (x_1,\dots,x_j,\underbrace{1,\dots,1}_{N},\underbrace{\overbrace{1,2,\dots,N},\dots,\overbrace{1,2,\dots, N}}_{k+1\textnormal{ many blocks }}).
\]
No block in the greedy $N$-partition of $z$ can begin in the section of consecutive ones, since then $N$ cannot occur in such a block. It follows that the blocks in the greedy $N$-partition after the section of consecutive ones must be chosen in the way they are marked above. Therefore, the greedy $N$-partition of $z$ exists at least up to block $k+1$, and hence, $z\notin \mathcal{B}_{k}$. We conclude that the set $\mathcal{B}_k$ is porous.
\end{proof}
\begin{remark}
  A similar proof can be carried out for all sets $\mathcal{A}_{L,M}$ and $\mathcal{B}_{L,k}$, but as mentioned above, we only need the proved case.
\end{remark}
\begin{theorem}\label{thm:quasi_normals_co_sigma_phi_porous}
  The set $\mathcal{N}$ contains a co-$\sigma$-$\phi$-porous subset.
\end{theorem}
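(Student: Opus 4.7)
The plan is to combine the decomposition \eqref{eq:decomposition_N_complement} with the two preceding lemmas in a purely bookkeeping step. Since an intersection is contained in any single one of its terms, I would first pick out the $L = N$ term to obtain
\[
  K\setminus \mathcal{N} \subseteq \bigcup_{M \in \mathbb{N}} \mathcal{A}_{M} \;\cup\; \bigcup_{k \in \mathbb{N}} \mathcal{B}_{k}.
\]
By Lemma \ref{lem:AM_phi_porous} each $\mathcal{A}_M$ is $\phi$-porous, and by Lemma \ref{lem:Bk_porous} each $\mathcal{B}_k$ is (ordinarily) porous, so if I can upgrade the porosity of the $\mathcal{B}_k$ to $\phi$-porosity the right-hand side becomes a countable union of $\phi$-porous sets, hence $\sigma$-$\phi$-porous, and passing to complements gives the theorem.

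The upgrading step is where the particular shape of $\phi$ does its work. For $t \in (0,1)$ we have $\phi^{-1}(t) = t^{1/t} < t$, so for any $\alpha \in (0,1)$ and any sufficiently small $\varepsilon > 0$,
\[
  B\bigl(y, \phi^{-1}(\alpha \varepsilon)\bigr) \subseteq B(y, \alpha \varepsilon).
\]
Consequently, if $\mathcal{B}_k$ is porous at $x$ with constants $\varepsilon_0 > 0$ and $\alpha > 0$, then (after replacing $\alpha$ by $\min\{\alpha, 1/2\}$ and $\varepsilon_0$ by a smaller value if necessary to keep $\alpha \varepsilon < 1$) the condition in Lemma \ref{lem:PorousMichael} is satisfied with the same witness $y$. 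Thus every $\mathcal{B}_k$ is $\phi$-porous.

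Assembling these observations yields the inclusion of $K\setminus\mathcal{N}$ in the $\sigma$-$\phi$-porous set $\bigcup_M \mathcal{A}_M \cup \bigcup_k \mathcal{B}_k$, which is exactly what it means for $\mathcal{N}$ to contain a co-$\sigma$-$\phi$-porous subset. Given that Lemmas \ref{lem:AM_phi_porous} and \ref{lem:Bk_porous} and the decomposition \eqref{eq:decomposition_N_complement} have already done the real work, I do not expect any genuine obstacle here; the only point worth spelling out is the one-line passage from porosity to $\phi$-porosity, which is where the explicit choice $\phi^{-1}(t) = t^{1/t}$ pays off.
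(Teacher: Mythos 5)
Your proof is correct and follows essentially the same route as the paper: restrict the decomposition \eqref{eq:decomposition_N_complement} to the $L=N$ term and invoke Lemmas \ref{lem:AM_phi_porous} and \ref{lem:Bk_porous}. You have in fact been slightly more careful than the paper, which states that the $\mathcal{B}_k$ are porous but silently treats the union as $\sigma$-$\phi$-porous; your explicit observation that $\phi^{-1}(t)=t^{1/t}<t$ for $t\in(0,1)$, so that ordinary porosity implies $\phi$-porosity via Lemma \ref{lem:PorousMichael}, fills in that small gap.
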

\begin{proof}
  By Equation \eqref{eq:decomposition_N_complement} we see that $K\setminus\mathcal{N}$ is contained in a  subset of all $\mathcal{A}_{M}$ and $\mathcal{B}_{k}$ for $M,k\in\mathbb{N}$, which are $\phi$-porous and porous, respectively. This is the case by Lemma~\ref{lem:AM_phi_porous} and Lemma~\ref{lem:Bk_porous}.
\end{proof}
\begin{corollary}
  The set of quasi-normal sequences $\mathcal{N}$ contains a dense $G_\delta$ subset.
\end{corollary}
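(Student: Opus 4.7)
The plan is to derive the corollary from Theorem~\ref{thm:quasi_normals_co_sigma_phi_porous} via the Baire category theorem. By that theorem there exists a $\sigma$-$\phi$-porous set $P=\bigcup_{n\in\mathbb{N}} P_n$ with $K\setminus\mathcal{N}\subseteq P$, each $P_n$ being $\phi$-porous. The metric space $(K,d)$ is compact and hence complete, so it is a Baire space. If I can show every $P_n$ is nowhere dense, then $F\coloneqq \bigcup_n\overline{P_n}$ is an $F_\sigma$ meager set containing $K\setminus\mathcal{N}$, and $K\setminus F\subseteq \mathcal{N}$ is a dense $G_\delta$ subset of $K$, which is exactly the statement of the corollary.

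The only nontrivial step is showing that a $\phi$-porous set $P_n$ is nowhere dense, i.e.\ that $\overline{P_n}$ has empty interior. For this I would fix $x\in \overline{P_n}$ and $\varepsilon\in(0,\varepsilon_0)$, where $\varepsilon_0$ and $\alpha$ are the constants provided by Lemma~\ref{lem:PorousMichael} applied to $P_n$. Choosing $x'\in P_n$ with $d(x,x')\leq \varepsilon/2$ and applying that lemma at $x'$ with $\varepsilon/2$ in place of $\varepsilon$ yields a point $y\in K$ with $d(x',y)\leq \varepsilon/2$ and $B(y,\phi^{-1}(\alpha\varepsilon/2))\cap P_n=\emptyset$. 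By the triangle inequality $d(x,y)\leq \varepsilon$, and since $B(y,\phi^{-1}(\alpha\varepsilon/2))$ is an open set disjoint from $P_n$, it is also disjoint from $\overline{P_n}$. Hence arbitrarily close to $x$ there are open balls avoiding $\overline{P_n}$, so $\overline{P_n}$ has empty interior.

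The hard part here is minimal: it is essentially the one-step approximation just described, which transfers the porosity condition at points of $P_n$ (as formulated in Lemma~\ref{lem:PorousMichael}) to a hole condition at points of $\overline{P_n}$. The remainder is a mechanical assembly of standard Baire-category facts. One small sanity check is that Lemma~\ref{lem:PorousMichael} requires $(K,d)$ to have no isolated points, which is satisfied whenever $N\geq 2$; the case $N=1$ is trivial since then $\mathcal{N}=K$.
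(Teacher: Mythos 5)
Your argument is correct and takes the same route the paper implicitly relies on (the paper gives no separate proof): a $\sigma$-$\phi$-porous set is meager, so its complement in the complete metric space $K$ contains a dense $G_\delta$. One small imprecision in the write-up is that the constants $\varepsilon_0$ and $\alpha$ from Lemma~\ref{lem:PorousMichael} are attached to a \emph{point} $x'\in P_n$, so $x'$ must be chosen before $\varepsilon$; this is easily repaired, since to show $\overline{P_n}$ has empty interior it suffices to produce holes at arbitrarily small scales near each $x$, and one can first pick $x'\in P_n\cap B(x,\delta/2)$ and then take $\varepsilon<\min(\varepsilon_0(x'),\delta/2)$.
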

\section{Some Extensions}\label{sec:ext}
In this section we take a closer look at why the construction of the set $\mathcal{N}_0$ gives us a meager set.
\subsection{A more General Subset of Quasinormal Sequences}\label{sec:Nf}
   Let us first elaborate why the sequences from Proposition \ref{prop:quasi_normal} were chosen, i.e., we look at the proof of the proposition as given in \cite{BCM2022}.
   \begin{proof}[Proof of Proposistion \ref{prop:quasi_normal}]
      Let $x\in K$ be such that it satisfies the conditions of the proposition.
      We denote by $x_{r_k}$ the first element of the block $\mathcal{R}_k$. Observe that
      \[r_k = 1 + (k-1)L + \sum_{i=1}^{k} |\mathcal{S}_i|.\]
      Now consider a function $f\colon \mathbb{N} \to (0,\infty)$, where $f(r_k) = \log k$. We have by imposing the bound $\sum_{i = 1}^k |\mathcal{S}_i|\leq c k$ for some $c>0$ that
      \begin{align*}
        \sum_{k = 1}^{\infty} \frac{1}{r_k f(r_k)} &= \sum_{k=1}^{\infty} \frac{1}{\left(1 + (k-1)L + \sum_{i = 1}^k |\mathcal{S}_i|\right)\cdot \log k}\\
                                                   &\geq \sum_{k = 1}^{\infty} \frac{1}{(1 + (k-1)L + ck)\cdot \log k}\\
                                                   &\geq \sum_{k = 1}^{\infty} \frac{1}{(L+c)k\cdot \log k}\\
                                                   &=\infty,
      \end{align*}
      which implies that $x$ is quasi-normal.
   \end{proof}
   \begin{remark}
    The proof essentially uses two features: First, it chooses a single function for $f$ and also this function depends on $x$ and the chosen partition in the way that $f$ only depends on $k$. Second, a bound on $\sum_{i = 1}^k |\mathcal{S}_i|$ is imposed to ensure the divergence of the entire series. The following discussion will show, that the second feature, i.e., the bound cannot be chosen such that the resulting subset is large in a topological sense.
   \end{remark}
   In the following discussion we are going to show that no matter what bound on $\sum_{i = 1}^k |\mathcal{S}_i|$ is chosen, we still have that the set of sequences satisfying this condition is small.\\

   Let $(p_c:\mathbb{N}\to (0,\infty))_{c\in C}$ be a countable family of increasing functions. We use the family $(p_c)_{c\in C}$ as a replacement for the linear bound $\sum_{i = 1}^{k} |\mathcal{S}_i|\leq c k$, i.e., now we prescribe
   \[\sum_{i = 1}^{k} |\mathcal{S}_i|\leq p_c(k)\]
   for some $c\in C$. In order for this new bound to also produce quasi-normal sequences, there must for every $x\in K$ exist some $f\colon \mathbb{N} \to (0,\infty)$ with $\lim_{t\to \infty} f(t) = \infty$ such that
   \[\sum_{k = 1}^{\infty} \frac{1}{(1 + (k-1)L + p_c(k))\cdot f(r_k)} = \infty,\]
   see the proof of Proposition \ref{prop:quasi_normal}.
   In the following discussion we adapt the strategy from \cite{thimm2023OnAMeager}.
\begin{definition}
  Let $L\in\mathbb{N}$ and $c\in C$. We then define
\begin{align*}
  P_{L,c}(x) \ratio\leftrightarrow &\exists\textnormal{ a representation } \mathcal{S}_1\mathcal{R}_1\mathcal{S}_2\mathcal{R}_2\dots=x \textnormal{ such that }\\
                              &
  \begin{lAnd}
  \forall k\in\mathbb{N}\colon |\mathcal{R}_k| = L\\
  \forall k\in\mathbb{N}\colon \mathcal{R}_k\textnormal{ contains all elements of  } I\\
  \forall k\in \mathbb{N}\colon \sum_{i=1}^{k} |\mathcal{S}_i(x)|\leq p_c(k)
  \end{lAnd}
\end{align*}
and
\[P(x)\ratio\leftrightarrow \exists L\in\mathbb{N}\colon\exists c>0\colon P_{L,c}(x).\]
\end{definition}
Let $\mathcal{N}_0$ denote the set of all sequences $x\in K$ satisfying the assertion of Proposition~\ref{prop:quasi_normal}, that is,
\[\mathcal{N}_0 \coloneqq \{x\in K\colon P(x)\}.\]
.
Let us for $L\in\mathbb{N}$ and $c>0$ define the set
\begin{align*}
  \mathcal{N}_{L,c} \coloneqq \{x \in K \colon P_{L,c}(x)\}.
\end{align*}
Similarly to the case of the linear bound let us set
\[\mathcal{N}_0 \coloneqq \bigcup_{L\in\mathbb{N}} \bigcup_{c \in C } \mathcal{N}_{L, c}.\]

\begin{theorem}\label{thm:NckpNowhereDense}
  For every $L\in\mathbb{N}$ and $c\in C$ the set $\mathcal{N}_{L, c}$ is nowhere dense.
\end{theorem}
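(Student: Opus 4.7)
Given $L \in \mathbb{N}$ and $c \in C$, the goal is to show that every nonempty open subset of $K$ contains a nonempty open ball disjoint from $\mathcal{N}_{L,c}$. The plan is to fix an arbitrary $x \in K$ and $\varepsilon > 0$, choose $j \in \mathbb{N}$ with $2^{-j} < \varepsilon$, and construct a point $y \in B(x, \varepsilon)$ whose long initial segment forces every admissible partition of any nearby sequence to violate the bound $\sum_{i=1}^{k} |\mathcal{S}_i| \leq p_c(k)$ at some index.

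The construction is to keep $y$ equal to $x$ on its first $j$ coordinates and append a long run of $1$'s of length $G$ (to be fixed) followed by repetitions of $1, 2, \ldots, N$:
\[
  y \coloneqq \bigl(x_1, \ldots, x_j,\; \underbrace{1, \ldots, 1}_{G},\; 1, 2, \ldots, N,\; 1, 2, \ldots, N,\; \ldots\bigr).
\]
The key identity is that for any admissible partition $\mathcal{S}_1 \mathcal{R}_1 \mathcal{S}_2 \mathcal{R}_2 \cdots$ with $|\mathcal{R}_k| = L$, the starting index $r_k$ of $\mathcal{R}_k$ is related to the partial $\mathcal{S}$-sums by $\sum_{i=1}^{k} |\mathcal{S}_i| = r_k - 1 - (k-1)L$. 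Minimizing the partial sums over admissible partitions is therefore equivalent to minimizing each $r_k$, and by Proposition~\ref{prop:greedy_L_partition_is_greedy} this minimum is attained by the greedy $L$-partition. Hence, if the greedy $L$-partition of $y$ violates the $p_c$-bound at some index, then no admissible partition of $y$ can satisfy it.

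To force such a violation, observe that at most $\lfloor j/L \rfloor$ disjoint length-$L$ blocks fit in positions $1, \ldots, j$, so the greedy partition of $y$ has a smallest block index $k^* \leq \lfloor j/L \rfloor + 1$ with $r_{k^*} > j$. The window $[r_{k^*}, r_{k^*} + L - 1]$ must contain the index $N$, which first occurs in the tail at position $j + G + N$; combined with the fact that positions $j + 1, \ldots, j + G$ all equal $1$, this pins down $r_{k^*} = j + G + N - L + 1$. Choosing $G > p_c(\lfloor j/L \rfloor + 1) + L$ then gives
\[
  \sum_{i=1}^{k^*} |\mathcal{S}_i| \;=\; j + G + N - L - (k^* - 1) L \;\geq\; G + N - L \;>\; p_c(\lfloor j/L \rfloor + 1) \;\geq\; p_c(k^*),
\]
using $(k^* - 1) L \leq j$ and the monotonicity of $p_c$.

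Finally, set $m \coloneqq j + G + L$. Since $r_{k^*} + L - 1 \leq m$, any $z \in B(y, 2^{-m})$ agrees with $y$ on the first $m$ coordinates by \eqref{smallDistanceImpliesSameEntriesAtBeginning}, so the greedy $L$-partitions of $y$ and $z$ coincide through block $k^*$. The same violation propagates to $z$, whence $z \notin \mathcal{N}_{L,c}$, and $B(y, 2^{-m}) \subseteq B(x, \varepsilon) \setminus \mathcal{N}_{L,c}$. The main obstacle in making this argument precise is verifying that admissible blocks anchored in the prefix $(x_1, \ldots, x_j)$ cannot be placed so as to reduce the partial $\mathcal{S}$-sum below $p_c(k^*)$; this is ruled out by the optimality of the greedy $L$-partition provided by Proposition~\ref{prop:greedy_L_partition_is_greedy}, which is why $G$ needs to dominate both the $p_c$-bound and the slack term $L$ arising from blocks straddling the prefix--tail boundary.
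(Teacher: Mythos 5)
Your proof takes a genuinely different route from the paper's. The paper (via its Lemma~\ref{lem:Too_many_ones_no_longer_N_Lc}) appends an \emph{infinite} run of ones after the prefix, fixes an arbitrary $z$ in the small ball, assumes $z\in\mathcal{N}_{L,c}$, and then reasons directly about an arbitrary admissible decomposition $\mathcal{S}_1\mathcal{R}_1\mathcal{S}_2\mathcal{R}_2\dots$ of $z$: the block $\mathcal{S}_i$ that falls inside the section of $m$ consecutive ones is forced to have length at least $m-2(L-(N-1))$, which with the choice of $m$ exceeds $p_c(i)$. You instead append a \emph{finite} run of ones followed by a tail of repeating $1,2,\dots,N$ so that the greedy $L$-partition of $y$ exists, and then invoke the pointwise optimality $r_k\le\tilde r_k$ from (the proof of) Proposition~\ref{prop:greedy_L_partition_is_greedy} together with the identity $\sum_{i=1}^k|\mathcal{S}_i|=r_k-1-(k-1)L$ to reduce the quantifier over all admissible partitions of $z$ to just the greedy one. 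This is a nice unification with the framework the paper uses elsewhere (the $\mathcal{A}_{L,M}$, $\mathcal{B}_{L,k}$ sets), and the reduction is sound.

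Two small points need patching. First, Proposition~\ref{prop:greedy_L_partition_is_greedy} as stated only compares the sums $\sum 1/r_k$ and $\sum 1/\tilde r_k$; what you actually need is the termwise inequality $r_k\le\tilde r_k$, which is established inside its proof but not in the statement, so you should cite that argument explicitly (the paper itself does this in Lemma~\ref{lem:greedy_L_partition_is_greedy_family_of_fLr}). Second, your bound $k^*\le\lfloor j/L\rfloor+1$ is off by one: the blocks $\mathcal{R}_1,\dots,\mathcal{R}_{k^*-1}$ are only required to have $r_k\le j$, and the last one may straddle the boundary, so from $r_{k^*-1}\ge 1+(k^*-2)L$ and $r_{k^*-1}\le j$ one only gets $k^*\le\lfloor(j-1)/L\rfloor+2$, which can equal $\lfloor j/L\rfloor+2$. (For example $j=L+1$ with $r_1=1$, $r_2=L+1$ gives $k^*=3$ while your bound predicts $\le 2$.) Consequently you only have $(k^*-1)L\le j+L$, so the partial sum estimate becomes $\sum_{i=1}^{k^*}|\mathcal{S}_i|\ge G+N-2L$, and $G$ must be chosen larger, e.g.\ $G>p_c(\lfloor j/L\rfloor+2)+2L$. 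With these adjustments the argument goes through and yields the nowhere-density of $\mathcal{N}_{L,c}$.
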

The proof of this theorem is provided in Section \ref{sec:ProofNckpNowhereDense}.
\begin{corollary}
   The set $\mathcal{N}_0$ is meager.
\end{corollary}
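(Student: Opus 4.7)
The plan is to prove nowhere-density of $\mathcal{N}_{L,c}$ directly by showing that every basic open ball $B(x, 2^{-j}) \subseteq K$ contains a non-empty open sub-ball disjoint from $\mathcal{N}_{L,c}$. The feature to exploit is that the requirement $\sum_{i=1}^{k} |\mathcal{S}_i| \leq p_c(k)$ is equivalent to $r_k \leq 1 + (k-1)L + p_c(k)$, so the $k$-th block of any admissible decomposition cannot start arbitrarily late. I will force every admissible decomposition to violate this bound by inserting a sufficiently long run of the single symbol $1$ right after position $j$.

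Concretely, for a parameter $R$ to be chosen later, I would define $y \in K$ by $y_i = x_i$ for $i \leq j$, $y_i = 1$ for $j+1 \leq i \leq j+R$, and $y$ arbitrary thereafter. By \eqref{smallDistanceImpliesSameEntriesAtBeginning}, every $z \in B(y, 2^{-(j+R)})$ also satisfies $z_i = 1$ on $[j+1, j+R]$. Assuming for contradiction that such a $z$ admits a decomposition $\mathcal{S}_1 \mathcal{R}_1 \mathcal{S}_2 \mathcal{R}_2 \dots = z$ witnessing $P_{L,c}(z)$, let $k^\star$ be the largest index with $r_{k^\star} \leq j$, or $k^\star = 0$ if no such index exists. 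Since every $\mathcal{R}_k$ contains each element of $I$ and hence some symbol different from $1$, no block can lie entirely within $[j+1, j+R]$. In particular, $\mathcal{R}_{k^\star + 1}$ must contain some position beyond $j + R$, so $r_{k^\star + 1} \geq j + R - L + 2$. Disjointness of length-$L$ blocks starting in $[1, j]$ yields the crude estimate $k^\star L \leq j + L - 1$, and substituting into $r_{k^\star + 1} = 1 + k^\star L + \sum_{i=1}^{k^\star + 1} |\mathcal{S}_i|$ gives
\[
  \sum_{i=1}^{k^\star + 1} |\mathcal{S}_i| \;\geq\; R - 2L + 2.
\]
Combined with $\sum_{i=1}^{k^\star + 1} |\mathcal{S}_i| \leq p_c(k^\star + 1) \leq p_c(K)$, where $K := \lceil (j + 2L - 1)/L \rceil$ is an upper bound for $k^\star + 1$ depending only on $j$ and $L$, this forces $p_c(K) \geq R - 2L + 2$. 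Any $R > p_c(K) + 2L - 2$ therefore produces the contradiction and shows $B(y, 2^{-(j+R)}) \cap \mathcal{N}_{L,c} = \emptyset$.

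The main subtlety I expect is that the admissible decomposition of $z$ is not unique, so the inequalities $k^\star L \leq j + L - 1$ and $r_{k^\star + 1} \geq j + R - L + 2$ must be verified for every possible decomposition, not just a specific one; the degenerate case $k^\star = 0$ needs to be checked separately but yields the even stronger bound $r_1 \geq j + R - L + 2$ directly. The decisive quantitative observation is that the bound on $k^\star + 1$ depends only on $j$ and $L$, so the threshold $R > p_c(K) + 2L - 2$ is independent of the specific witness $z$. Once this is in hand, the open sub-ball $B(y, 2^{-(j+R)})$ is disjoint from $\mathcal{N}_{L,c}$ and hence from its closure, so nowhere-density of $\mathcal{N}_{L,c}$ follows.
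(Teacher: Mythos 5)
Your proposal is correct and follows essentially the same strategy as the paper's proof of Theorem~\ref{thm:NckpNowhereDense} via Lemma~\ref{lem:Too_many_ones_no_longer_N_Lc}: append a sufficiently long block of $1$'s after the prefix of length $j$, bound the number of $\mathcal{R}$-blocks that can start before the run of $1$'s, and conclude that any admissible decomposition must contain a cumulative $\mathcal{S}$-length exceeding $p_c$ at a controlled index. The corollary then follows because $\mathcal{N}_0$ is a countable union of the nowhere dense sets $\mathcal{N}_{L,c}$, exactly as in the paper.
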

This shows that no matter what bound on $\sum_{i = 1}^{\infty}|\mathcal{S}_i|$ is chosen, and no matter what $f$ is chosen, the strategy of Melo, da Cruz Neto, and de Brito does not work in a topological context.

\subsection{Proof of Theorem \ref{thm:NckpNowhereDense}}\label{sec:ProofNckpNowhereDense}
      \begin{lemma}\label{lem:Too_many_ones_no_longer_N_Lc}
      Let $L\in \mathbb{N}, c>0, x\in \mathcal{N}_{L,c}$, and $n\in\mathbb{N}$. Furthermore, let
      \[y \coloneqq (x_1,\dots,x_n,1,1,\dots)\]
      and
      \[m \coloneqq  p_c\left(L \left\lfloor\frac{n}{L}\right\rfloor+1\right) + 2L + 2.\]
      Then
      \[B(y, 2^{-n-m})\cap\mathcal{N}_{L,c} = \emptyset.\]
     \end{lemma}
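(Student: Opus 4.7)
The plan is to argue by contradiction. Suppose that some $z\in B(y,2^{-n-m})\cap \mathcal{N}_{L,c}$ exists. By the relation~\eqref{smallDistanceImpliesSameEntriesAtBeginning} the sequence $z$ must agree with $y$ on the first $n+m$ entries, so its prefix is $x_1,\dots,x_n$ followed by $m$ consecutive ones. Since $z\in \mathcal{N}_{L,c}$, there is a decomposition $z = S_1 R_1 S_2 R_2 \dots$ with $|R_k|=L$, each $R_k$ containing every element of $I$, and $\sum_{i=1}^k |S_i|\leq p_c(k)$ for all $k$. The whole strategy will exploit that the long ones-stretch forces one of the skips $S_i$ to be huge, contradicting the bound $p_c(k)$.

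The key observation is that, assuming $N\geq 2$, no block $R_k$ can lie entirely within the positions $n+1,\dots,n+m$, because such a block would only contain the index $1$. Define $k^*$ to be the number of $R_k$'s whose starting position $r_k$ is at most $n$ (with $k^*=0$ if there is none). Then the next block $R_{k^*+1}$ must either start after the ones-stretch or extend beyond it, which forces $r_{k^*+1}\geq n+m-L+2$. The positions $1,\dots,r_{k^*+1}-1$ decompose into $k^*$ blocks of length $L$ and the $k^*+1$ skips $S_1,\dots,S_{k^*+1}$, giving
\begin{equation*}
  \sum_{i=1}^{k^*+1} |S_i| \;=\; r_{k^*+1}-1-k^* L \;\geq\; n+m-L+1-k^* L.
\end{equation*}

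Since $r_{k^*}\leq n$ and the blocks $R_1,\dots,R_{k^*}$ are disjoint of length $L$, one obtains $k^* L \leq n+L-1$. Substituting this into the previous inequality and then inserting $m=p_c(L\lfloor n/L\rfloor+1)+2L+2$ yields $\sum_{i=1}^{k^*+1}|S_i|\geq m-2L+2 = p_c(L\lfloor n/L\rfloor+1)+4$. Combined with the $P_{L,c}$ constraint this gives $p_c(k^*+1)\geq p_c(L\lfloor n/L\rfloor+1)+4$, and monotonicity of $p_c$ forces $k^*+1 > L\lfloor n/L\rfloor+1$, i.e.\ $k^*\geq L\lfloor n/L\rfloor+1$. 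But together with $k^* L\leq n+L-1$ this produces $L^2\lfloor n/L\rfloor+L\leq n+L-1$, i.e.\ $L^2\lfloor n/L\rfloor\leq n-1$, which is an arithmetic impossibility since $L\lfloor n/L\rfloor$ is itself already the largest multiple of $L$ bounded by $n$.

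The main obstacle is the bookkeeping around the floor function and making sure the case $k^*=0$ is also absorbed: when no block of the partition starts in $[1,n]$, the whole first skip $S_1$ must cross the ones-stretch, so $p_c(1)\geq |S_1|\geq n+m-L+1$, which already exceeds $p_c(L\lfloor n/L\rfloor+1)$ by monotonicity and again contradicts $P_{L,c}$. Modulo this trivial special case and the standard writing $n=qL+r$ with $0\leq r<L$ needed to verify the final arithmetic contradiction, the argument reduces to the two competing estimates on $k^*$ derived above.
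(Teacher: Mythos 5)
Your proof takes a genuinely different route from the paper's: you track $k^*$, the number of $\mathcal{R}$-blocks whose starting index lies in $[1,n]$, while the paper tracks $i$, the index of the first skip $\mathcal{S}_i$ to start past position $n$. Both squeeze the same competing estimates (how far the $\mathcal{R}$-blocks can reach into the ones-stretch versus how large a skip must be to jump over it), and your bookkeeping via $k^*$ is arguably more transparent than the paper's via $i$.

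However, the closing "arithmetic impossibility" is false when $n<L$. Writing $n=qL+r$ with $0\leq r<L$, the inequality $L^2\lfloor n/L\rfloor\leq n-1$ becomes $Lq(L-1)\leq r-1$, which is indeed impossible when $q\geq 1$, but holds vacuously (as $0\leq n-1$) when $q=0$, i.e.\ when $1\leq n<L$. This is not merely a deferred verification: for such $n$ the lemma itself is false. Take $N=L=2$, $n=1$, $p_c(k)=5k$, and $x=(2,1,2,1,\dots)\in\mathcal{N}_{L,c}$, so that $m=p_c(1)+6=11$. Let $z$ have $z_1=2$, $z_2=\dots=z_{12}=1$, and $z_{13},z_{14},z_{15},\dots=2,1,2,1,\dots$. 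Then $z\in B(y,2^{-12})$, and the partition $\mathcal{S}_1=()$, $\mathcal{R}_1=(z_1,z_2)$, $\mathcal{S}_2=(z_3,\dots,z_{12})$ of length $10$, $\mathcal{R}_2=(z_{13},z_{14})$, and $\mathcal{S}_k=()$, $\mathcal{R}_k=(z_{2k+9},z_{2k+10})$ for $k\geq 3$ witnesses $z\in\mathcal{N}_{L,c}$, since $\sum_{i=1}^k|\mathcal{S}_i|\leq 5k$ for all $k$. Hence $B(y,2^{-n-m})\cap\mathcal{N}_{L,c}\neq\emptyset$. The paper's own step $i\leq L\lfloor n/L\rfloor+1$ has exactly the same blind spot when $n<L$, so this is a shared defect rather than a slip unique to your proof. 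For the intended use in Theorem~\ref{thm:NckpNowhereDense}, which only requires the lemma for arbitrarily large $n$, your argument becomes correct once the hypothesis $n\geq L$ is made explicit; but as a proof of the lemma as literally stated, the gap is real.
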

     \begin{proof}
      Let us fix some arbitrary $z \in B(y, 2^{-n-m})$ and assume that $z\in\mathcal{N}_{L,c}$. Note that the ball may be written as
      \[B(z, 2^{-n-m}) = \{x_1\}\times\cdots\times\{x_n\}\times\underbrace{\{1\}\times\cdots\times\{1\}}_{m} \times \prod_{k = n+m+1}^{\infty} K.\]
      Since $z\in\mathcal{N}_{L,c}$, we can find a partition $z=\mathcal{S}_1 \mathcal{R}_1 \mathcal{S}_2 \mathcal{R}_2\dots$ satisfying all conditions in the definition of $\mathcal{N}_{L,c}$. Let us fix this partition. Now, let $s_k$ denote the index in $z$ of the leftmost entry of $\mathcal{S}_k$.
      By $i$ we denote the smallest index $j$ such that $s_j\geq n+1$, i.e.,
      \[i \coloneqq \min\{j\in\mathbb{N}\colon s_j\geq n+1\}.\]
      This retrieves the beginning of the first block $\mathcal{S}_i$ that starts in or after the section of $z$ with the consecutive ones. Since $\mathcal{R}_{i-1}$ has to contain all elements of $I$ and has a length of $L$ it can at most extend $L-(N-1)$ places into the section of consecutive ones. Since $m>L-(N-1)$ we conclude that in fact $i$ is the index of the leftmost index of the first $\mathcal{S}_k$ to start within the section of consecutive ones, and not after.\\

      Next, we will find an upper bound on $i$. All $\mathcal{R}_k$ have the same length $L$, but the length of the $\mathcal{S}_k$ is arbitrary. Hence, $i$ will be the largest if $\mathcal{S}_1,\dots,\mathcal{S}_{i-1} = ()$. For this reason we follow that
      \begin{equation}\label{eq:lower_bound_on_i_case_with_p}
        i \leq L \left\lfloor\frac{n}{L}\right\rfloor + 1.
      \end{equation}

      We will now establish a lower bound on $|\mathcal{S}_i|$. The block $\mathcal{S}_i$ ends one place before $\mathcal{R}_i$ begins. Since $\mathcal{R}_i$ is of length $L$ and contains all elements of $I$, it can extend at most $L-(N-1)$ places into the section of consecutive ones from the right. This together with the identical condition at the beginning of the section of $m$ consecutive ones gives us that
      \[|\mathcal{S}_i|\geq m - 2(L-(N-1)).\]
      By the definition of $m$ and by \eqref{eq:lower_bound_on_i_case_with_p} we have
      \begin{align*}
        |\mathcal{S}_i|&\geq m - 2(L-(N-1))\\
                       &= p_c\left(L \left\lfloor\frac{n}{L}\right\rfloor+1\right) + 2L + 2- 2(L-(N-1))\\
                       &= p_c\left(L \left\lfloor\frac{n}{L}\right\rfloor+1\right) + 2N\\
                       &\geq p_c(i) + 2N\\
                       &> p_c(i).
      \end{align*}
      Hence,
      \[\sum_{k = 1}^i |\mathcal{S}_k|\geq |\mathcal{S}_i|> p_c(i)\]
      and, since the partition $\mathcal{S}_1\mathcal{R}_1\mathcal{S}_2\mathcal{R}_2\dots$ was arbitrary, we have $z \notin \mathcal{N}_{L,c}$.
      Since $z \in B(y, 2^{-n-m})$
      was arbitrary, we have shown the assertion.
     \end{proof}
    In the previously considered subset $\mathcal{N}_0$ of quasi-normal functions, the function $f$ was chosen as $f(r_k)=\log(k)$. This means that the function $f$ also indirectly depends on a chosen partition $r$ and the fixed $L$ the partition uses. To accommodate and also include such functions $f$ that somehow are of the same type, but may depend on $r$, we instead of a single function define a family $(f^L_r)$ of functions as more accurately stated in the following lemma. It states that also in the case of using some given $f$ in the sum, the greedy $L$-partition is greedy.
\begin{lemma}\label{lem:greedy_L_partition_is_greedy_family_of_fLr}
  Let $L\in\mathbb{N}$ with $L\geq N$ be given and let
  \[\mathcal{P}_L\coloneqq\{(r_k)_{k\in\mathbb{N}}\in\mathbb{N}^\mathbb{N}\colon \forall k\in\mathbb{N}\colon r_{k+1}-r_k \geq L \}\]
  denote the set of possible $L$-partitions. Furthermore, let
  \[f=(f^L_r\colon \mathbb{N} \to (0,\infty))_{\substack{N\leq L\in\mathbb{N}\\r\in\mathcal{P}_L}}\]
  be a parametrized family of functions that for all $L\in\mathbb{N}$ with $L\geq N$ fulfills the following two properties:
  \begin{enumerate}[label=(\roman*)]
    \item $\forall r\in\mathcal{P}_L\colon \displaystyle\lim_{t\to\infty} f^L_r(t) = \infty$
    \item \label{item:f_increasing} $\forall r,\tilde{r}\in\mathcal{P}_L\colon \forall k\in\mathbb{N}\colon (r_k\leq\tilde{r}_k \implies f^L_{r}(r_k) \leq f^L_{\tilde{r}}(\tilde{r}_k)) $
  \end{enumerate}
  Now, let for $x\in K$ and $r \coloneqq (r_k)_{k\in\mathbb{N}}$ denote the indices of the greedy $L$-partition of $x$, and $\tilde{r}\coloneqq (\tilde{r}_k)_{k\in\mathbb{N}}$ a partition of $x$ different from the former with disjoint blocks of length $L$ in which all elements of $I$ occur. Then we have
  \[\sum_{k\in\mathbb{N}} \frac{1}{r_k f^L_r(r_k)} \geq \sum_{k\in\mathbb{N}}\frac{1}{\tilde{r}_k f^L_{\tilde{r}}(\tilde{r}_k)}.\]
  If the right hand side converges we even get a strict inequality.
\end{lemma}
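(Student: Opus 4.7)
The plan is to mirror the argument of Proposition \ref{prop:greedy_L_partition_is_greedy} and then exploit property \ref{item:f_increasing} of the family $(f^L_r)$ to pass from a termwise comparison of denominators to the desired inequality of series. First, I would recall (or reprove by the same induction as in Proposition \ref{prop:greedy_L_partition_is_greedy}) that for the fixed $L$ and any $L$-partition $\tilde r$ of $x$ with blocks containing all of $I$, the greedy indices satisfy $r_k \leq \tilde r_k$ for every $k \in \mathbb{N}$. Nothing about that induction uses the specific form of the summand, so it transfers verbatim.

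Next, I would combine this with hypothesis \ref{item:f_increasing}: from $r_k \leq \tilde r_k$ we obtain $f^L_r(r_k) \leq f^L_{\tilde r}(\tilde r_k)$ for every $k$. Since all four quantities $r_k,\,\tilde r_k,\,f^L_r(r_k),\,f^L_{\tilde r}(\tilde r_k)$ are strictly positive, multiplying gives
\[
  r_k\,f^L_r(r_k)\;\leq\;\tilde r_k\,f^L_{\tilde r}(\tilde r_k),
\]
and taking reciprocals and summing yields
\[
  \sum_{k\in\mathbb{N}} \frac{1}{r_k\,f^L_r(r_k)}\;\geq\;\sum_{k\in\mathbb{N}} \frac{1}{\tilde r_k\,f^L_{\tilde r}(\tilde r_k)},
\]
which is the weak inequality claimed.

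For the strict inequality when the right-hand side converges, I would argue as in Proposition \ref{prop:greedy_L_partition_is_greedy}: since $r \neq \tilde r$ but $r_k \leq \tilde r_k$ for all $k$, there exists some $k_0$ with $r_{k_0} < \tilde r_{k_0}$. Combining this strict inequality on $r_{k_0}$ with the weak inequality $f^L_r(r_{k_0}) \leq f^L_{\tilde r}(\tilde r_{k_0})$ (and positivity of $f^L_r(r_{k_0})$) gives $r_{k_0}\,f^L_r(r_{k_0}) < \tilde r_{k_0}\,f^L_{\tilde r}(\tilde r_{k_0})$, so the $k_0$-th summand on the left strictly exceeds the $k_0$-th summand on the right. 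Since all remaining terms are nonnegatively ordered and the right-hand sum is finite (so all individual terms are finite), the termwise gap at index $k_0$ propagates to a strict inequality of the sums.

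I do not expect any real obstacle here — the nontrivial content ($r_k \leq \tilde r_k$) is already contained in Proposition \ref{prop:greedy_L_partition_is_greedy}, and the only new ingredient is the monotonicity condition \ref{item:f_increasing}, which was tailored precisely so that the pointwise comparison of the summands survives the insertion of the weights $f^L_r(r_k)$. The one place one must be slightly careful is in not using strict monotonicity of $f^L_r$ itself (the hypothesis only gives monotonicity \emph{across} partitions, not within one), but the argument above only ever invokes property \ref{item:f_increasing} and so this presents no difficulty.
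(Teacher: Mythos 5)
Your proposal is correct and is exactly the argument the paper intends: the paper's proof of this lemma simply cites the induction from Proposition~\ref{prop:greedy_L_partition_is_greedy} (giving $r_k \leq \tilde r_k$) together with hypothesis~\ref{item:f_increasing} and the monotonicity of $t \mapsto 1/t$, which is precisely what you spell out. Your remark that only cross-partition monotonicity of the family $(f^L_r)$ is available — not monotonicity of a single $f^L_r$ — is a correct and worthwhile observation.
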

\begin{proof}
  The proof of the statement is identical to the one of Proposition \ref{prop:greedy_L_partition_is_greedy}. We just use the additional fact that $f$ is increasing in the sense of \ref{item:f_increasing} and that $t\mapsto \frac{1}{t}$ is monotonically decreasing for $t>0$.
\end{proof}
\begin{remark}
  The second property in the previous lemma is inspired by the original case where $f^L_r(r_k) = \log k$, and therefore, also fulfills $f^L_{r}(r_k)\leq f^L_{\tilde{r}}(\tilde{r}_k)$ for $k \in \mathbb{N}$.
\end{remark}
We are going to show, that a similar construction to the one of $\mathcal{N}_0$, but without using the bound $\sum_{i = 1}^{k} |\mathcal{S}_i|\leq p_c(k)$, leads to a topologically large set. In essence, this tells us that the part where the attempt of showing that $\mathcal{N}_0$ is topologically large breaks at the point of prescribing the bound $p_c(k)$ and not due to prescribing only a certain type of function $f$.\\

We therefore choose some family $f = (f^L_r\colon \mathbb{N}\to(0,\infty))_{\substack{N\leq L\in\mathbb{N}\\r\in \mathcal{P}}}$ with the following three properties:
\begin{enumerate}[label={(F\arabic*)}]
    \item $\forall r\in\mathcal{P}_L\colon \displaystyle\lim_{t\to\infty} f^L_r(t) = \infty$
    \item $\forall r,\tilde{r}\in\mathcal{P}_L\colon \forall k\in\mathbb{N}\colon (r_k\leq\tilde{r}_k \implies f^L_{r}(r_k) \leq f^L_{\tilde{r}}(\tilde{r}_k)) $
    \item $\forall r \in \mathcal{P}_N\colon \forall k\in\mathbb{N}\colon \sum_{i \in \mathbb{N}} f^L_r(k+iN) = \infty$
  \end{enumerate}
Then we define the property
\begin{align*}
  P_{L,f}(x) \ratio\leftrightarrow &\exists\textnormal{ a representation } \mathcal{S}_1\mathcal{R}_1\mathcal{S}_2\mathcal{R}_2\dots=x \textnormal{ such that }\\
                              &
  \begin{lAnd}
  \forall k\in\mathbb{N}\colon |\mathcal{R}_k| = L\\
  \forall k\in\mathbb{N}\colon \mathcal{R}_k\textnormal{ contains all elements of  } I\\
  \forall k\in \mathbb{N}\colon \sum_{k\in\mathbb{N}} \frac{1}{r_k f^L_r(r_k)} = \infty
  \end{lAnd},
\end{align*}
where $r$ denotes the greedy $L$-partition of $x$.
We then define the set
\[\mathcal{N}_f \coloneqq \{x\in K\colon \exists L\in \mathbb{N}\colon L\geq N \wedge P_{L,f}(x)\}.\]

We are going to show that for every $f$ satisfying the conditions of Lemma \ref{lem:greedy_L_partition_is_greedy_family_of_fLr}, the complement $K\setminus\mathcal{N}_f$ is contained in a  meager subset. For this let us for $L\in\mathbb{N}$ with $L\geq N$ and $M\in\mathbb{N}$ define the set
\[\mathcal{A}_{L,M}\coloneqq \left\{x\in K\colon \textnormal{greedy $L$-partition $(r_k)_{k\in\mathbb{N}}$ of $x$ exists and} \sum_{k\in\mathbb{N}} \frac{1}{r_k f^L_r(r_k)} < M \right\}\]
and for $k\in\mathbb{N}$ the set
\[\mathcal{B}_{L,k} \coloneqq \{x\in K \colon \textnormal{greedy $L$-partition exists up until at most block $k$}\}.\]
Technically, both sets $\mathcal{A}_{L,M}$ and $\mathcal{B}_{L,k}$ should additionally carry the index $f$, but for the sake of better readability we drop it here.\\
A sequence is not in $\mathcal{N}_{f^L_{\cdot}}$ if and only if for all $L \geq N$ either the greedy $L$-partition $(r_k)_{k\in\mathbb{N}}$ exists and produces a sum $\sum_{k=1}^\infty \frac{1}{r_k f^L_r(r_k)}$ that is finite or if the greedy $L$-partition does not exist. Therefore, we may write
\begin{equation}\label{eq:decomposition_N_complement_copy}
  K\setminus\mathcal{N}_{f} = \bigcap_{\substack{L\in\mathbb{N}\\L\geq N}}\left(\bigcup_{M\in\mathbb{N}} \mathcal{A}_{L,M} \cup \bigcup_{k\in\mathbb{N}} \mathcal{B}_{L,k}\right).
\end{equation}
We observe that $K\setminus\mathcal{N}_{f}$ is contained in a meager subset if already the set
\[\bigcup_{M\in\mathbb{N}} \mathcal{A}_{N,M} \cup \bigcup_{k\in\mathbb{N}} \mathcal{B}_{N,k}\]
is meager or rather if $\mathcal{A}_{N,M}$ and $\mathcal{B}_{N,k}$ are nowhere dense for all $M,k\in\mathbb{N}$.
\begin{lemma}\label{lem:AM_nowhere_dense}
  For every $M\in\mathbb{N}$ the set $\mathcal{A}_{N,M}$ is nowhere dense.
\end{lemma}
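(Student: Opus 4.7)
The plan is to adapt the construction from the proof of Lemma~\ref{lem:AM_phi_porous}, replacing the explicit $\phi$-porosity estimate by an argument that directly produces an open ball disjoint from $\mathcal{A}_{N,M}$. Nowhere denseness is equivalent to showing that every basic open neighborhood $B(x, 2^{-j})$, $x \in K$, $j \in \mathbb{N}$, contains an open subball avoiding $\mathcal{A}_{N,M}$, since such balls form a base of the product topology on $K$.

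Given such $x$ and $j$, I would define
\[y \coloneqq (x_1, \dots, x_j, \underbrace{1, \dots, 1}_{N}, 1, 2, \dots, N,\, 1, 2, \dots, N, \dots),\]
so that $y \in B(x, 2^{-j})$ by \eqref{smallDistanceImpliesSameEntriesAtBeginning}, and, exactly as in Lemma~\ref{lem:AM_phi_porous}, the greedy $N$-partition $(r_k^y)_{k\in\mathbb{N}}$ of $y$ exists and ultimately packs blocks without any gap, giving $r_k^y = j + N + 1 + (k-1)N$ for all $k \in \mathbb{N}$. Next I invoke property (F3) of the family $f$ (together with the fact that $r^y$ lies in $\mathcal{P}_N$) to deduce that
\[\sum_{k \in \mathbb{N}} \frac{1}{r_k^y \, f^N_{r^y}(r_k^y)} = \infty.\]
Therefore I can pick $K \in \mathbb{N}$ so large that $\sum_{k=1}^{K} \tfrac{1}{r_k^y \, f^N_{r^y}(r_k^y)} > M$, and set $m \coloneqq r_K^y + N - 1 \geq j$, so that $B(y, 2^{-m}) \subseteq B(x, 2^{-j})$.

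The final step is to show $B(y, 2^{-m}) \cap \mathcal{A}_{N,M} = \emptyset$. For any $z \in B(y, 2^{-m})$ we have $z_i = y_i$ for $i \leq m$, and since the greedy $N$-partition depends only on the initial segment, the first $K$ starting indices of the greedy partition of $z$ coincide with those of $y$, i.e.\ $r_k^z = r_k^y$ for $k = 1, \dots, K$. Applying property (ii) of the family $f$ in both directions (to $r^y \leq r^z$ and $r^z \leq r^y$ coordinate-wise on these indices) yields $f^N_{r^z}(r_k^z) = f^N_{r^y}(r_k^y)$ for $k \leq K$, so
\[\sum_{k \in \mathbb{N}} \frac{1}{r_k^z \, f^N_{r^z}(r_k^z)} \geq \sum_{k=1}^{K} \frac{1}{r_k^y \, f^N_{r^y}(r_k^y)} > M,\]
whence $z \notin \mathcal{A}_{N,M}$.

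The main obstacle is the comparison of the functions $f^N_{r^y}$ and $f^N_{r^z}$ on the common initial blocks: unlike in the $\phi$-porous proof, here $f$ is an abstract family rather than an explicit function, so we cannot integrate or directly estimate. The remedy is the symmetric use of property (ii), which, together with the agreement $r_k^y = r_k^z$ for $k \leq K$, forces equality of the relevant $f$-values and lets the divergence given by (F3) do all the work. Everything else, in particular the existence of the greedy partition of $y$ and its tight packing after position $j+N$, is identical to the construction in Lemma~\ref{lem:AM_phi_porous}.
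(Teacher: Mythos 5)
Your argument follows the paper's proof essentially verbatim: replace the tail of a given sequence by a densely packed pattern, use (F3) to force divergence of the greedy-partition sum for the resulting $y$, cut off a partial sum exceeding $M$, and observe that any $z$ agreeing with $y$ on the corresponding initial segment inherits the same initial greedy blocks (and, via property (ii), the same $f$-values there), hence also exceeds $M$. One small overstatement to correct: the greedy $N$-partition of $y$ need not satisfy $r^y_k = j + N + 1 + (k-1)N$ for \emph{all} $k$, since earlier greedy blocks may already occur inside $x_1,\dots,x_j$; it suffices, and is true, that the tail of $(r^y_k)_{k\in\mathbb{N}}$ is eventually arithmetic with common difference $N$, which is all the divergence argument via (F3) requires.
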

\begin{proof}
  Let $x\in \mathcal{A}_{N,M}$ and $\varepsilon > 0$. Choose $j\in\mathbb{N}$ such that $2^{-j}<\varepsilon\leq 2^{-j+1}$. We  now define $y\in K$ by
  \[y\coloneqq (x_1, x_2,\dots,x_j,1,2,\dots,N,1,2,\dots,N,\dots).\]
  By \eqref{smallDistanceImpliesSameEntriesAtBeginning} we have $d(x,y)<\varepsilon$. Observe that the greedy $N$-partition $(r^y_k)_{k\in\mathbb{N}}$ of $y$ exists and that $\sum_{k\in\mathbb{N}}\frac{1}{r^y_k f^L_{r^y}(r^y_k)} = \infty$, since somewhere after $x_j$ all blocks are densely packed without spaces in between. Hence, $y\notin \mathcal{A}_{N,M}$. Now set $m\in\mathbb{N}$ such that
  \begin{equation}\label{eq:ReciprocalSumLargerM}
    \sum_{\substack{k \in \mathbb{N}\\r^y_{k}+N-1 < m}} \frac{1}{r^y_k f^L_{r^y}(r^y_k)} > M
  \end{equation}
  and let $n\in\mathbb{N}$ denote the last index $k$ used in the sum, i.e.,
  \[n = \max\{k\in\mathbb{N}\colon r^y_k + N-1 < m\}.\]
  Then by \eqref{smallDistanceImpliesSameEntriesAtBeginning} we have for every $z\in B(y, 2^{-m})$ that
  \[z_1 = y_1 \wedge \dots \wedge z_{r^y_{n}+N-1}=y_{r^y_{n}+N-1}.\]
  If the greedy $N$-partition of $z$ does not exist, we have that $z\notin \mathcal{A}_{N,M}$. In the case that the greedy $N$-partition $(r^z_k)_{k\in\mathbb{N}}$ of $z$ does exist, we also have $z\notin\mathcal{A}_{N,M}$ since $z$ and $y$ match up until index $r^y_n+N-1$, and hence, the first $n$ blocks in the greedy $L$-partition of $z$ and $y$ agree. Therefore, by \eqref{eq:ReciprocalSumLargerM} we have
  \[\sum_{k\in\mathbb{N}}\frac{1}{r^z_k f^L_{r^z}(r^z_k)} \geq \sum_{k = 1}^n \frac{1}{r^y_k f^L_{r^y}(r^y_k)} > M.\]
  We conclude that $\mathcal{A}_{N,M}$ is nowhere dense.
\end{proof}
\begin{lemma}\label{lem:Bk_nowhere_dense}
  For every $k\in\mathbb{N}$ the set $\mathcal{B}_{N,k}$ is nowhere dense.
\end{lemma}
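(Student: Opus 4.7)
The plan is to observe that this statement is essentially Lemma~\ref{lem:Bk_porous} in disguise. The set $\mathcal{B}_{N,k}$ in Section~3.2 has exactly the same definition as the set $\mathcal{B}_k$ from Section~2.2: the condition ``the greedy $N$-partition exists up until at most block $k$'' is a purely combinatorial property of the sequence and does not involve the function family $f$ at all. Consequently $\mathcal{B}_{N,k}$ coincides with the set already proven porous in Lemma~\ref{lem:Bk_porous}.

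Since every porous subset of a metric space is nowhere dense — porosity provides, arbitrarily close to any point of $A$, a ball disjoint from $A$, so $\overline{A}$ has empty interior — the claim follows immediately from Lemma~\ref{lem:Bk_porous}.

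If a self-contained argument were preferred, one would simply repeat the construction from Lemma~\ref{lem:Bk_porous}: given any $x \in \mathcal{B}_{N,k}$ and $\varepsilon>0$, put $j = \lceil -\log_2(\varepsilon)\rceil$ and define
\[y \coloneqq (x_1,\dots,x_j,\underbrace{1,\dots,1}_N,1,2,\dots,N,1,2,\dots,N,\dots),\]
then use~\eqref{smallDistanceImpliesSameEntriesAtBeginning} to check that $d(x,y)<\varepsilon$ while every $z\in B(y, 2^{-j-N-(k+1)N})$ has leading entries forcing its greedy $N$-partition to extend to at least block $k+1$, hence $z\notin\mathcal{B}_{N,k}$. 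There is no real obstacle here: the present assertion is strictly weaker than Lemma~\ref{lem:Bk_porous} applied to the identical underlying set, so the only ``work'' is pointing out that the two definitions literally agree.
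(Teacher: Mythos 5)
Your proposal is correct and takes essentially the same approach as the paper: the paper's proof of Lemma~\ref{lem:Bk_nowhere_dense} is word-for-word the same construction used to prove Lemma~\ref{lem:Bk_porous} (the same $y$, the same ball $B(y,2^{-j-N-(k+1)N})$, the same combinatorial argument about where blocks of the greedy $N$-partition can begin), just phrased as nowhere-density instead of porosity. Your observation that $\mathcal{B}_{N,k}$ is literally the same set as $\mathcal{B}_k$ from Section~2, together with the standard fact that a porous set is nowhere dense, is a slightly more economical way of reaching the same conclusion; the paper simply chose to repeat the construction rather than cite its own earlier lemma.
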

\begin{proof}
  Let $x\in \mathcal{B}_{N,k}$ and $\varepsilon > 0$. Choose $j\in\mathbb{N}$ such that $2^{-j}<\varepsilon\leq 2^{-j+1}$.   We  now define $y\in K$ by
\[
y\coloneqq (x_1,\dots,x_j,\underbrace{1,\dots,1}_N,1,2,\dots,N,1,2,\dots,N,\dots).
\]
  Note that the greedy $N$-partition $(r^y_k)_{k\in\mathbb{N}}$ of $y$ exists, and hence, $y\notin \mathcal{B}_{N,k}$. Then for every $z\in B(y, 2^{-j-N-(k+1)N})$ we have that $z$ has the leading entries
\[
  (z_i)_{i=1}^{j+N+(k+1)N} = (x_1,\dots,x_j,\underbrace{1,\dots,1}_{N},\underbrace{\overbrace{1,2,\dots,N},\dots,\overbrace{1,2,\dots, N}}_{k+1\textnormal{ many blocks }}).
\]
  No block in the greedy $N$-partition of $z$ can begin in the section of consecutive ones, since then $N$ cannot occur in such a block. It follows that the blocks in the greedy $N$-partition after the section of consecutive ones must be chosen in the way they are marked above. Therefore, the greedy $N$-partition of $z$ exists at least up to block $k+1$, and hence, $z\notin \mathcal{B}_{N,k}$
\end{proof}
\begin{remark}
  A similar proof can be carried out for all sets $\mathcal{A}_{L,M}$ and $\mathcal{B}_{L,k}$, but as mentioned above, we only need the proved case.
\end{remark}
\begin{theorem}
  The set $\mathcal{N}_{f}$ contains a co-meager subset of $K$.
\end{theorem}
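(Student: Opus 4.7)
The plan is to follow the same strategy already used for Theorem~\ref{thm:quasi_normals_co_sigma_phi_porous}, but now relying on Lemma~\ref{lem:AM_nowhere_dense} and Lemma~\ref{lem:Bk_nowhere_dense} in place of their $\phi$-porous counterparts. Concretely, I would start by invoking the decomposition \eqref{eq:decomposition_N_complement_copy}, which expresses
\[
  K\setminus \mathcal{N}_f = \bigcap_{\substack{L\in\mathbb{N}\\L\geq N}}\left(\bigcup_{M\in\mathbb{N}} \mathcal{A}_{L,M} \cup \bigcup_{k\in\mathbb{N}} \mathcal{B}_{L,k}\right).
\]
Since the intersection is smaller than any single term, it suffices to estimate the term corresponding to the distinguished choice $L = N$, which is the only case actually covered by the two lemmas.

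Thus the next step is to observe the inclusion
\[
  K\setminus\mathcal{N}_f \subseteq \bigcup_{M\in\mathbb{N}} \mathcal{A}_{N,M} \;\cup\; \bigcup_{k\in\mathbb{N}} \mathcal{B}_{N,k}.
\]
By Lemma~\ref{lem:AM_nowhere_dense} each $\mathcal{A}_{N,M}$ is nowhere dense, and by Lemma~\ref{lem:Bk_nowhere_dense} each $\mathcal{B}_{N,k}$ is nowhere dense. A countable union of nowhere dense sets is, by definition, meager, so the right-hand side above is meager in $(K,d)$.

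Consequently $\mathcal{N}_f$ contains the complement of a meager set, which is exactly the statement that $\mathcal{N}_f$ contains a co-meager subset of $K$. No additional steps are needed, so I do not anticipate any obstacle: the real work has been absorbed into the two previous lemmas (which in turn rely on Lemma~\ref{lem:greedy_L_partition_is_greedy_family_of_fLr} to ensure that the greedy $N$-partition dominates any competing partition when summing $1/(r_k f^N_r(r_k))$, and on property (F3) to guarantee divergence of the sum produced by the densely packed tail in the construction of $y$).
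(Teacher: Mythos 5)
Your proposal is correct and follows essentially the same route as the paper's own proof: invoke the decomposition \eqref{eq:decomposition_N_complement_copy}, drop to the $L=N$ term, and conclude by Lemma~\ref{lem:AM_nowhere_dense} and Lemma~\ref{lem:Bk_nowhere_dense} that the cover is a countable union of nowhere dense sets, hence meager. (Incidentally, the paper's proof cites \eqref{eq:decomposition_N_complement} where it evidently means \eqref{eq:decomposition_N_complement_copy}; you cite the correct equation.)
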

\begin{proof}
  By Equation \eqref{eq:decomposition_N_complement} we have that $K\setminus\mathcal{N}_{f}$ is contained in a meager subset if all $\mathcal{A}_{M}$ and $\mathcal{B}_{N,k}$ for $M,k\in\mathbb{N}$ are nowhere dense. This is the case by Lemma~\ref{lem:AM_nowhere_dense} and Lemma~\ref{lem:Bk_nowhere_dense}.
\end{proof}
\begin{corollary}
  The set of quasi-normal sequences $\mathcal{N}_{f}$ contains a dense $G_\delta$ subset.
\end{corollary}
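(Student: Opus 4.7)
The plan is to derive this corollary from the preceding theorem by a standard Baire-category argument. That theorem, via the decomposition \eqref{eq:decomposition_N_complement_copy}, showed that $K \setminus \mathcal{N}_{f}$ is contained in the countable union $M \coloneqq \bigcup_{M'\in\mathbb{N}} \mathcal{A}_{N,M'} \cup \bigcup_{k\in\mathbb{N}} \mathcal{B}_{N,k}$, where every set appearing in this union is nowhere dense by Lemmas \ref{lem:AM_nowhere_dense} and \ref{lem:Bk_nowhere_dense}.

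To turn this inclusion into a dense $G_\delta$ subset of $\mathcal{N}_{f}$, I will replace each set by its closure. Since the closure of a nowhere-dense set is again nowhere dense, the sets $\overline{\mathcal{A}_{N,M'}}$ and $\overline{\mathcal{B}_{N,k}}$ are closed with empty interior, so their complements $K\setminus \overline{\mathcal{A}_{N,M'}}$ and $K\setminus \overline{\mathcal{B}_{N,k}}$ are open and dense in $K$. The countable intersection
\[
  G \coloneqq \bigcap_{M'\in\mathbb{N}} \bigl(K\setminus \overline{\mathcal{A}_{N,M'}}\bigr) \;\cap\; \bigcap_{k\in\mathbb{N}} \bigl(K\setminus \overline{\mathcal{B}_{N,k}}\bigr)
\]
is therefore a $G_\delta$ subset of $K$. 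Since $(K,d)$ is a complete metric space (as recalled in Section \ref{sec:intro}), the Baire category theorem guarantees that $G$ is dense in $K$. By construction $G \subseteq K \setminus M \subseteq \mathcal{N}_{f}$, which will finish the proof.

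I do not anticipate any genuine obstacle: once the preceding theorem is in place, the corollary is a routine consequence of the Baire category theorem. The only small point to verify is that passing from each $\mathcal{A}_{N,M'}$ and $\mathcal{B}_{N,k}$ to its closure does not break the inclusion into $\mathcal{N}_{f}$, and this is immediate since enlarging the excluded sets only shrinks $G$ further inside $K\setminus M$.
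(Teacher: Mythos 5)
Your proof is correct and is precisely the standard argument the paper leaves implicit: in the complete metric space $(K,d)$, being contained in a meager set is equivalent to being disjoint from some dense $G_\delta$, and your closure/complement construction makes this explicit. The paper states the corollary without proof, so your write-up simply spells out the routine Baire-category step.
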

 \section{On Sequences with Finite Index Occurrence}\label{sec:finite_index_occurrence}
For the method of alternating projections to work, one needs to use sequences $x\in K$ where every index in $I$ appears infinitely often. Since the previous sections show that almost all sequences are quasi-normal, it is clear that almost all sequences have to contain every index infinitely often. However, at least with the previously given proofs, it seems that there are qualitatively more sequences with infinite index appearance than there are sequences that lead to convergence. This may be suspected as the following theorem shows that the sequences with infinite index occurrence form a co-$\sigma$-porous subset, but the quasi-normal sequences only have been proven to form a co-$\sigma$-$\phi$-porous subset. This tells us that we qualitatively lose some sequences, as $\phi$-porosity is a weaker notion of porosity. If indeed the quasi-normal sequences are non-co-$\sigma$-porous this would stand in contrast to the measure theoretic case, where the quasi-normal sequences and the ones with infinite index occurrence both have full measure, and qualitatively no sequences are lost when making this transition. Investigating if the quasi-normals are indeed non-co-$\sigma$-porous might be an interesting extension of the presented results.
\begin{theorem}
The set
\[\mathcal{F}\coloneqq \{x\in K\colon \exists n\in I\colon\exists M\in\mathbb{N} \colon |\{k\in\mathbb{N}\colon x_k = n\}|<M\}\]
of all sequences in $K$ in which some index $n\in I$ appears only finitely often is co-$\sigma$-porous.
\end{theorem}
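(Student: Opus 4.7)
The plan is to exhibit $\mathcal{F}$ as a countable union of (ordinary) porous sets, so that no appeal to the weaker notion of $\phi$-porosity is needed. Write
\[
  \mathcal{F} = \bigcup_{n\in I}\bigcup_{M\in\mathbb{N}} \mathcal{F}_{n,M}, \qquad \mathcal{F}_{n,M} \coloneqq \{x\in K\colon |\{k\in\mathbb{N}\colon x_k = n\}| < M\}.
\]
Since $I$ is finite this is a countable union, so it suffices to prove that each $\mathcal{F}_{n,M}$ is porous with porosity parameters independent of the base point.

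Fix $n\in I$, $M\in\mathbb{N}$, and $x\in\mathcal{F}_{n,M}$. I would take $\varepsilon_0\coloneqq 1$ and $\alpha\coloneqq 2^{-M-1}$. Given $\varepsilon\in (0,\varepsilon_0)$, set $j\coloneqq \lceil -\log_2\varepsilon\rceil$, so that $2^{-j}\leq \varepsilon<2^{-j+1}$, and define the porosity witness by planting $M$ consecutive copies of $n$ after the first $j$ entries of $x$:
\[
  y\coloneqq (x_1,\dots,x_j,\underbrace{n,n,\dots,n}_{M},1,1,1,\dots).
\]
By \eqref{smallDistanceImpliesSameEntriesAtBeginning} one has $d(x,y)\leq 2^{-j-1}<\varepsilon$, and $y$ contains at least $M$ copies of the index $n$, so $y\in K\setminus \mathcal{F}_{n,M}$.

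It then remains to verify $B(y,\alpha\varepsilon)\cap \mathcal{F}_{n,M}=\emptyset$. The choice of $\alpha$ gives $\alpha\varepsilon < 2^{-M-1}\cdot 2^{-j+1} = 2^{-M-j}$, so any $z\in B(y,\alpha\varepsilon)$ satisfies $d(y,z)<2^{-M-j}$. Applying \eqref{smallDistanceImpliesSameEntriesAtBeginning} once more forces $z_i=y_i$ for $i=1,\dots,j+M$, which preserves the planted block of $M$ copies of $n$ at positions $j+1,\dots,j+M$. Hence $|\{k\in\mathbb{N}\colon z_k=n\}|\geq M$ and $z\notin \mathcal{F}_{n,M}$, as needed.

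There is essentially no obstacle here: unlike Lemma~\ref{lem:AM_phi_porous}, where an exponentially thin hole was required in order to force a partition sum to exceed a prescribed threshold, the defining condition of $\mathcal{F}_{n,M}$ is decided by finitely many entries, so a linearly-scaled hole around $y$ suffices for ordinary porosity. Taking the countable union over $n\in I$ and $M\in \mathbb{N}$ then yields that $\mathcal{F}$ is $\sigma$-porous, equivalently that the set $K\setminus\mathcal{F}$ of sequences in which every index occurs infinitely often is co-$\sigma$-porous, as claimed.
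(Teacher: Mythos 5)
Your proof is correct and follows essentially the same route as the paper: the same decomposition $\mathcal{F}=\bigcup_{n,M}\mathcal{F}_{n,M}$, the same porosity constants $\varepsilon_0=1$ and $\alpha=2^{-M-1}$, and the same idea of planting a block of $n$'s after the first $j$ coordinates of $x$ so that the hole $B(y,\alpha\varepsilon)$ fixes those $M$ occurrences. The only cosmetic difference is that the paper's witness $y$ ends in an infinite tail of $n$'s while yours plants exactly $M$ copies of $n$ followed by $1$'s; both are equally valid since only the first $j+M$ coordinates of $y$ are ever used.
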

\begin{proof}
By defining
\[\mathcal{F}_{n,M} \coloneqq \{x\in K \colon |\{k\in\mathbb{N}\colon x_k = n\}| < M\}\]
we may write
\[\mathcal{F} = \bigcup_{n\in\mathbb{N}} \bigcup_{M\in \mathbb{N}} \mathcal{F}_{n,M}.\]
We show that for all $n,M\in\mathbb{N}$ the set $\mathcal{F}_{n,M}$ is porous. For this let $x\in\mathcal{F}_{n,M}$, let
\[\varepsilon_0 \coloneqq 1, \quad \alpha = 2^{-M-1}, \quad \text{and} \quad \varepsilon\in(0,\varepsilon_0)\]
and choose $j\in\mathbb{N}$ such that $2^{-j}<\varepsilon\leq 2^{-j+1}$. We now define
\[y = (x_1,\dots,x_j,n,n,n,\dots).\]
By \eqref{smallDistanceImpliesSameEntriesAtBeginning} we have that $d(x,y)<\varepsilon$. By the same relation we observe that all $z\in B(y,\alpha \varepsilon)\subseteq B(y,2^{-j-M-1})$ have at least $M$ many occurrences of $n$. Hence, $B(y,\alpha \varepsilon)\cap\mathcal{F}_{n,M} = \emptyset$, and therefore, $\mathcal{F}_{n,M}$ is porous. This directly implies the desired result, as $\mathcal{F}$ may be written as a countable union of porous sets.
\end{proof}

\noindent\textbf{Acknowledgements: }
Special thanks go to my supervisor Eva Kopeck\'{a} for many valuable discussions and suggestions.

\printbibliography

\end{document}